\documentclass[12pt]{article}

\usepackage{amsmath,amsthm,amssymb,graphicx}


\def\real{\mathbb R}
\def\complex{\mathbb C}

\def\graph{{\mathcal G}}
\def\vset{\mathcal V}
\def\eset{\mathcal E}
\def\vertspace{{\mathbb V}}
\def\edgespace{{\mathbb E}}
\def\tree{\mathcal T}

\def\dop{{\mathcal L}}
\def\domain{{\mathcal D}}

\newtheorem{thm}{Theorem}[section]

\newtheorem{lem}[thm]{Lemma}
\newtheorem{prop}[thm]{Proposition}

\theoremstyle{definition}

\theoremstyle{remark}

\newcommand{\thmref}[1]{Theorem~\ref{#1}}

\newcommand{\lemref}[1]{Lemma~\ref{#1}}
\newcommand{\propref}[1]{Proposition~\ref{#1}}

\numberwithin{equation}{section}

\title{The Spectral Geometry of Biregular Graphs: \\ a Quantum Graph Approach}

\author{Robert Carlson
\\Department of Mathematics
\\University of Colorado at Colorado Springs
\\Colorado Springs, Colorado USA
\\rcarlson@uccs.edu}

\date{2023}

\begin{document}

\maketitle

\begin{abstract}
The spectral theory of the Laplace differential operator for biregular quantum graphs is developed.
Trees are studied in detail.  Generating functions for closed nonbacktracking walks
appear when resolvents for trees are related to resolvents for biregular graphs they cover.
The relationship between resolvent traces for finite graphs and walk generating functions is
especially productive.  A detailed description of the rational extension of nonbacktracking walk generating functions
is presented.  

\end{abstract}

{\bf Mathematics Subject Classification:} 34B45, O5C50

{\bf Keywords:} Biregular quantum graphs, graph spectral geometry, resolvent traces, 
nonbacktracking walk generating functions

\newpage
\section{Introduction}

Let $\graph $ denote a locally finite connected simple graph with a countable vertex set $\vset$
and edge set $\eset$.    
Edges are treated as intervals of length $1$.  As a consequence of this identification 
one may use the Hilbert space $L^2(\graph ) = \oplus _{e \in \eset} L^2(e)$ and 
a standard construction to equip $\graph $ with a self-adjoint differential operator
$\dop $ which acts by $-D^2$ on its domain.
 
This work treats the spectral theory of $\dop $ when $\graph $ is a biregular graph, that is a bipartite
graph whose vertex degrees are the same for all vertices in one of the two vertex classes.
Following the plan of \cite{Carlson97}, where regular graphs were studied, 
an essential part of the analysis is the spectral theory of $\dop $ for the biregular tree $\tree$,
including a detailed description of the spectrum and resolvent $R_{\tree}(\lambda ) = (\dop - \lambda I)^{-1}$. 
The biregular trees are universal covering spaces for biregular graphs.  This allows 
resolvents $R_{\graph }(\lambda ) = (\dop - \lambda I)^{-1}$ for graphs $\graph$ to be constructed using 
the tree resolvents.

When $\graph $ is finite, the resolvent  $R_{\graph }(\lambda )$ is a trace class operator.  Two expressions for 
the trace of the resolvent are developed.  The first is a conventional series based on eigenvalues.  The second expression
describes the trace as a product of a term coming from $\tree $ and a term involving
a generating function for closed nonbacktracking walks in $\graph $.
Given two finite biregular graphs with the same covering tree, the quotient of their resolvent traces is equal to 
a quotient involving their walk generating functions.  This formula is combined with complex variable methods to show that the  
walk generating functions extend as rational functions, with considerable information about pole numbers and locations.
 
The work starts with a discussion of quantum graphs and the differential operator $\dop $.   
For the finite graphs studied here, the eigenvalues of $\dop $ can be computed from the eigenvalues of a 
conventional discrete graph Laplacian $\Delta $ together with some additional combinatorial data.
The regular behavior of the eigenvalue sequence of $\dop $ allows for a compact description
of the trace of $R_{\graph}(\lambda )$ with explicit dependence on the eigenvalues of $\Delta $.   
 
The next section introduces biregular graphs.  Important examples for this work are the complete bipartite graphs
whose resolvent trace is computed.  Conveniently, from the quantum graph perspective regular graphs can be 
converted to biregular graphs with minimal effect on the spectrum, allowing this work to subsume some of the previous
analysis of regular graphs.  
 
The spectral theory for a biregular tree $\tree$ is then developed.
Given an edge $e$ in $\tree$, there are special solutions of the eigenvalue equation $-D^2y = \lambda y$
which are functions of the distance from $e$.  The propagation of these solutions depends on multipliers $\mu ^{\pm}(\lambda )$     
whose behavior largely determines the spectrum of $\dop $ for the tree.  The functions $\mu ^{\pm}(\lambda )$ have a central role 
in the construction of the resolvents $R_{\tree}(\lambda )$; 
their properties as analytic functions are discussed.  

The final section opens with a discussion of closed nonbacktracking walks in finite graphs and their generating functions.
The generating functions for complete bipartite trees are computed.   This is followed by material on covering spaces and the
construction of resolvents $R_{\graph}(\lambda )$ from $R_{\tree}(\lambda )$.  When $\graph $ is a finite biregular graph
the resolvent traces are closely related to walk generating functions, which are described in detail.  

In the case of regular graphs, where each vertex has the same degree, connections between the spectral theory of the discrete Laplacian and
such generating functions were previously explored in \cite{Brooks91}, and later from a quantum graph viewpoint in \cite{Carlson97}.  
A more recent and extensive treatment of related material is \cite{Terras}.  Spectral gaps for biregular graphs have also been treated;
see \cite{BDH} and the references given there. 
Many years ago Bob Brooks suggested that the techniques of \cite{Carlson97} could be used in the biregular case.  
His suggestion is gratefully acknowledged. 

\section{Differential operators on graphs}

Background material on quantum graphs \cite{BK}, tailored to our purposes, will be used.
In this work edges have length one, and are usually identified with $[0,1]$.
By using the identification of edges with intervals, function spaces and differential operators may be defined.  
Let $L^2(e) = L^2[0,1]$ be the usual Lebesgue space. $L^2( \graph )$ denotes the Hilbert space $\oplus _{e \in \eset} L^2(e)$.  
For $f \in L^2({\graph })$, the function $f_e:[0,1] \to \complex$ is the restriction of $f$ to the edge $e$.
The inner product is
\[\langle f, g \rangle = \int_{\graph } f\overline g
= \sum_{e \in \eset} \int_0^1 f_e(x)\overline{g_e(x)} \ dx .\]
A formal second derivative operator $ -D^2 $ acts componentwise on functions $f \in L^2(\graph )$ in its domain $\domain$.  
Functions $f \in \domain$ are continuous on $\graph $, and continuously differentiable on each edge, with 
$f_e'$ absolutely continuous for each edge $e$, and $f'' \in L^2(\graph )$.
At each vertex $v$ the function $f$ must satisfy 
\begin{equation}\label{vcond1}
\sum_{e\sim v} \partial _{\nu} f_e(v) = 0,
\end{equation} 
where $e \sim v$ indicates an edge $e$ incident on the vertex $v$.
Here $\partial _{\nu}$ denotes the derivative computed in 'outward pointing' local coordinates which identify each edge $e$ incident on $v$
with a copy of $[0,1]$ and identify $v$ with $0$. The operator $\dop :L^2(\graph ) \to L^2(\graph ) $ acting by $-D^2$ with domain $\domain $ is self-adjoint \cite{BK}.
The spectrum is a subset of $[0,\infty )$.  

If the vertex set $\vset $ is finite then $\dop $ has discrete spectrum, with eigenvalues $\lambda _0 = 0 < \lambda _1 \le \lambda _2 \le \dots $,
listed with multiplicity, and an orthonormal basis of eigenfunctions $\phi _n$ with eigenvalues $\lambda _n$  \cite[p. 67]{BK}. 
The resolvent $R_{\graph} (\lambda ) = (\dop - \lambda I)^{-1} $ is trace class \cite[p. 206-212]{RS1} or \cite[p. 521-525]{Kato} 
with summable eigenvalue sequence $1/(\lambda _n - \lambda )$
for $\lambda $ in the resolvent set $\rho $, the complement of the spectrum. 
The resolvent can be written as an integral operator with kernel
\[R_{\graph}(\lambda ,x,y) = \sum_n \frac{\phi _n(x)\overline{\phi _n(y)} }{\lambda _n - \lambda } ,  \quad x,y \in \graph .\]
Since $\{ \phi _n \}$ is an orthonormal sequence the trace is 
\begin{equation} \label{trace1a}
{\rm tr} R_{\graph}(\lambda ) = \sum_n \frac{1}{\lambda _n - \lambda } = \int_{\graph } R(\lambda ,x,x) dx .
\end{equation}
The next proposition records some simple facts.

\begin{prop} \label{reality}
Suppose $\graph $ is finite.
If $\Im (\lambda ) \not= 0$ then $\Im (\lambda )\Im ({\rm tr}R_{\graph}(\lambda )) > 0$.  The function
${\rm tr}R_{\graph}(\lambda ) $ has exactly one root between distinct eigenvalues $\lambda _n$ and $\lambda _{n+1}$, and no other roots.
\end{prop}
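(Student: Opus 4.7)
The plan is to read everything off the spectral series
\[\mathrm{tr}\, R_{\graph}(\lambda) = \sum_n \frac{1}{\lambda_n - \lambda}\]
recorded in \eqref{trace1a}. Because $R_{\graph}(\lambda)$ is trace class, the sequence $\{1/(\lambda_n - \lambda)\}$ is absolutely summable on the resolvent set, and since $1/\lambda_n \to 0$ one gets $\sum_n 1/(\lambda_n - \lambda)^2 < \infty$ as well. These summability statements will justify the termwise manipulations below.

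For the imaginary-part inequality I would rationalize each summand,
\[\frac{1}{\lambda_n - \lambda} = \frac{\lambda_n - \overline{\lambda}}{|\lambda_n - \lambda|^2},\]
take imaginary parts (using $\lambda_n \in \real$), and sum, obtaining
\[\Im\bigl(\mathrm{tr}\, R_{\graph}(\lambda)\bigr) \;=\; \Im(\lambda)\, \sum_n \frac{1}{|\lambda_n - \lambda|^2}.\]
The series on the right is strictly positive and finite, so multiplying by $\Im(\lambda)$ yields the asserted positivity. In particular $\mathrm{tr}\, R_{\graph}$ has no zeros off the real axis.

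Next I would locate the real zeros gap by gap. On any open interval $(\lambda_n, \lambda_{n+1})$ between two distinct consecutive eigenvalues the trace is smooth and real-valued, and termwise differentiation gives
\[\frac{d}{d\lambda}\, \mathrm{tr}\, R_{\graph}(\lambda) = \sum_n \frac{1}{(\lambda_n - \lambda)^2} > 0,\]
so $\mathrm{tr}\, R_{\graph}$ is strictly increasing on the gap. Grouping the finitely many indices whose eigenvalue equals the left endpoint, their combined contribution tends to $-\infty$ as $\lambda \to \lambda_n^+$ while the remaining tail stays bounded; symmetrically the trace tends to $+\infty$ as $\lambda \to \lambda_{n+1}^-$. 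Strict monotonicity together with the intermediate value theorem then gives exactly one zero in the gap.

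Finally, to rule out any other zero I would note that each eigenvalue is a pole of the trace and that on $(-\infty, \lambda_0) = (-\infty, 0)$ every summand $1/(\lambda_n - \lambda)$ is strictly positive, forcing the trace to be positive there. Together with the off-axis argument of the second paragraph, this exhausts $\complex$. The only step demanding any care is the interchange of limit and sum justifying both termwise differentiation and the $\pm\infty$ endpoint behavior, and the summability recorded in the first paragraph makes both routine, so I do not foresee a substantive obstacle.
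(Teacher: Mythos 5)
Your proposal is correct and follows essentially the same route as the paper: the same rationalization $\Im(1/(\lambda_n-\lambda)) = \Im(\lambda)/|\lambda_n-\lambda|^2$ for the off-axis claim, positivity of all terms on $(-\infty,0)$, and strict monotonicity from the termwise derivative $\sum_n 1/(\lambda_n-\lambda)^2$ combined with the $\mp\infty$ limits at the gap endpoints. You merely spell out the summability justifications that the paper leaves implicit.
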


\begin{proof}
Since $\lambda _n \in \real$,
\[\Im (\frac{1}{\lambda _n - \lambda }) = \frac{1}{2i} [  \frac{1}{\lambda _n - \lambda } -  \frac{1}{\lambda _n - \overline{\lambda }}]
=  \frac{1}{2i} \frac{\lambda - \overline{\lambda }}{|\lambda _n - \lambda |^2},\]
leading to the first claim, which also shows that any roots of ${\rm tr}R_{\graph}(\lambda ) $ must be real.  If $\lambda < 0$ 
then ${\rm tr}R_{\graph}(\lambda ) $ is a sum of positive terms, and there is a pole at $\lambda = 0$.
Finally, for $\lambda \in \real$, $\lim _{\lambda \downarrow \lambda _n} = -\infty$, $\lim _{\lambda \uparrow \lambda _n} = +\infty$
and
\[ \frac{d}{d\lambda } \sum_n \frac{1}{\lambda _n - \lambda } = \sum _n \frac{1}{(\lambda _n - \lambda )^2} ,\]
so ${\rm tr}R_{\graph}(\lambda ) $ is increasing between the eigenvalues.
\end{proof}

\subsection{Trace formula 1}

When $\graph $ is finite, a strong link between the eigenvalues of $\dop $ and a discrete Laplacian $\Delta $
leads to a formula for ${\rm tr}R_{\graph}(\lambda )$. 
Given a vertex $v \in \graph $, let $w_1,\dots ,w_{deg (v)}$ be the vertices adjacent to $v$.
On the vertex space $\vertspace $ consisting of functions $f:\vset \to \complex $, one has 
the adjacency operator  
\[Af(v) = \sum_{i=1}^{deg(v)} f(w_i),\]
and the degree operator  
\[Tf(v) = deg(v) f(v).\]
The operator $\Delta $ defined by 
\begin{equation} \label{Deltadef}
\Delta  f(v) = f(v) - T^{-1} Af(v) 
\end{equation}
is similar (in the sense of matrix conjugation)  \cite[pp. 3-7]{Chung} to the symmetric discrete Laplacian  $I - T^{-1/2} AT^{-1/2} $;
the eigenvalues are real and nonnegative.  In case $\graph $ is biregular the matrix $T^{-1/2} AT^{-1/2}$ is a constant multiple of $A$.

Suppose $y(x)$ is an eigenfunction of $\dop $ with eigenvalue $\lambda = \omega ^2$.
If the edge from $v$ to $w_i$ is identified with $[0,1]$, then 
\[y(x) = y(v)\cos (\omega x) + \frac{y(w_i) - y(v)\cos(\omega )}{\sin(\omega )} \sin (\omega x), \quad \sin(\omega ) \not= 0, \]
and $y$ may be recovered from its values at $0,1$ except when $ \sin(\omega ) = 0$.
The condition \eqref{vcond1}
gives
\[ f(v) \cos (\omega ) = \frac{1}{{\rm deg}(v)}\sum_j f(w_j) .\]
The argument can be reversed, leading to the following well known result \cite{Below, Cattaneo}
or \cite[p. 90-92]{BK}.

\begin{prop} \label{combspec}
If $\lambda _k \notin \{ n^2 \pi ^2 \ | \ n = 0,1,2,\dots \} $,
then $\lambda _k$ is an eigenvalue of $\dop $
if and only if $\nu  = 1 - \cos (\sqrt{\lambda _k} )$
is an eigenvalue of $\Delta $.  In this case $\lambda _k$ and $\nu $ have the same geometric multiplicity.
\end{prop}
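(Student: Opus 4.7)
The plan is to use the map $y \mapsto f := y|_{\vset}$ as a linear isomorphism between the two eigenspaces. The hypothesis $\lambda_k \notin \{n^2\pi^2\}$ is equivalent to $\sin(\omega) \neq 0$ for $\omega = \sqrt{\lambda_k}$, which ensures that on every edge $e$ from $v$ to $w_i$ the representation
\[y_e(x) = y(v)\cos(\omega x) + \frac{y(w_i) - y(v)\cos(\omega)}{\sin(\omega)}\sin(\omega x)\]
is available and determines $y_e$ from its endpoint values. In particular, the restriction map is injective on the $\lambda_k$-eigenspace.

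For the forward direction, assume $\dop y = \lambda_k y$. Each $y_e$ satisfies $-y_e''=\omega^2 y_e$ and $y$ is continuous across vertices, so each $y_e$ has the form above. A short computation gives $\partial_{\nu} y_e(v) = \omega[y(w_i)-y(v)\cos(\omega)]/\sin(\omega)$, and \eqref{vcond1} collapses to the identity $f(v)\cos(\omega) = \deg(v)^{-1}\sum_j f(w_j)$ already recorded in the text. Rewriting through \eqref{Deltadef} yields $\Delta f(v) = (1-\cos\omega)f(v)$, so $f$ is a $\nu$-eigenfunction of $\Delta$.

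For the converse, starting from $f:\vset \to \complex$ with $\Delta f = \nu f$, define $y$ edge by edge using the same explicit formula, taking endpoint values from $f$. The main point requiring care, and the one nontrivial step, is to check that the two possible parametrizations of a single edge (starting from either endpoint) produce the same function on the edge. Both candidates solve $-y''=\omega^2 y$ on $[0,1]$ with identical boundary data $f(v)$ and $f(w_i)$; since $\sin(\omega) \neq 0$, this two-point boundary value problem has a unique solution, so the two parametrizations agree and continuity at vertices is automatic. Reversing the computation of $\sum_{e\sim v}\partial_{\nu} y_e(v)$ then shows that \eqref{vcond1} holds precisely because $\Delta f = (1-\cos\omega)f$.

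These two constructions are mutually inverse linear maps, so they establish a bijection between the $\lambda_k$-eigenspace of $\dop$ and the $\nu$-eigenspace of $\Delta$, giving equal geometric multiplicities. The exceptional set $\{n^2\pi^2\}$ enters only as the locus where vertex values fail to determine edge solutions uniquely; away from it, the argument is largely bookkeeping organized around the explicit formula for $y_e$.
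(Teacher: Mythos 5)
Your proposal is correct and follows essentially the same route as the paper, which only sketches the argument (writing each edge restriction in terms of its endpoint values, observing that the vertex condition \eqref{vcond1} collapses to $f(v)\cos(\omega) = \deg(v)^{-1}\sum_j f(w_j)$, and noting the argument reverses) before citing the literature for the full details. Your write-up supplies exactly those details—in particular the well-definedness of the edgewise reconstruction via uniqueness of the two-point boundary value problem when $\sin(\omega)\neq 0$, and the observation that restriction and reconstruction are mutually inverse linear maps—so nothing further is needed.
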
  

Henceforth, $\graph $ is assumed to be bipartite with $N_{\vset} $ vertices and $N_{\eset}$ edges.
Following \cite{CarlsonFFT},  additional spectral information is available.
Let $E(\lambda )$ denote the eigenspace of
$\dop $ with eigenvalue $\lambda $.
First a standard observation:
if $y \in E(0)$ then
\[ 0 = \int_{\graph} y\dop y = \int_{\graph} (y')^2,\]
so $y$ must be constant on each edge.   Since $y$ is continuous on $\graph $, $E(0)$ consists of the functions constant on $\graph $.

The edge space $\edgespace$ of $\graph $ consists of functions $f:\eset \to \complex$.
Pick a spanning tree $\tree $ for $\graph $, letting $e_1,\dots ,e_M$, with  $M = N_{\eset} - N_{\vset} + 1$, be the edges of $\graph $ not in $\tree $.
For each $m = 1,\dots ,M$ there is \cite[p. 53]{Bollobas} a cycle $C_m$ containing $e_m$ together with a path in $\tree $ connecting the vertices of $e_m$.
Let $Z_0(\graph )$ be the subspace of $\edgespace$ generated by the independent functions with the value $1$ on $C_m$ and zero on the complement. 

\begin{thm} \label{SpecA}  If $\graph $ is finite and bipartite, then
\[\dim E(n^2\pi ^2) = N_{\eset} - N_{\vset} + 2, \quad n = 1,2,\dots .\]
\end{thm}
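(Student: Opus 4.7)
The plan is to parametrize any eigenfunction with eigenvalue $n^2\pi^2$ by a \emph{cosine piece} (its vertex values) and a \emph{sine piece} (an edge-indexed vector), show that the continuity condition only constrains the cosine piece while the derivative condition \eqref{vcond1} only constrains the sine piece, and then count each piece separately. On each edge $e$, identified with $[0,1]$ via some chosen orientation $v_0(e)\to v_1(e)$, a local solution of $-y_e''=n^2\pi^2 y_e$ has the form $y_e(x)=a_e\cos(n\pi x)+b_e\sin(n\pi x)$. Continuity at the endpoints forces $a_e=y(v_0(e))$ together with $y(v_1(e))=(-1)^n y(v_0(e))$, and in local ``outward'' coordinates at a vertex $v$ one checks that $y_e'(0)=n\pi b_e^v$, so the cosine coefficients drop out of \eqref{vcond1}. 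Thus the two pieces decouple.

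For the cosine piece, I would argue that when $n$ is even the relation $y(v_1)=y(v_0)$ across every edge forces $y$ to be constant on the connected graph $\graph$, giving one dimension. When $n$ is odd, the relation $y(v_1)=-y(v_0)$ across every edge is exactly consistent with a proper $2$-coloring, so precisely because $\graph$ is bipartite there is a one-dimensional family of vertex values that take values $\pm c$ on the two classes. Either way the cosine piece contributes dimension one.

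For the sine piece, I would compute the transformation $x\mapsto 1-x$ to show that the sine coefficient in $v_1$-based coordinates equals $(-1)^{n+1}b_e$, so that the derivative conditions at all vertices assemble into the linear system $B_n^{T}b=0$ where $B_n$ is the $N_\vset\times N_\eset$ incidence matrix of $\graph$, signed (edge incidence $+1$ at tail, $-1$ at head) if $n$ is even and unsigned ($+1$ at both ends) if $n$ is odd. The signed case is the standard boundary operator, whose kernel is the cycle space of dimension $N_\eset-N_\vset+1$, with the basis $Z_0(\graph)$ described in the excerpt. For the unsigned case, I would use bipartiteness once more: multiplying the rows indexed by one bipartition class by $-1$ converts the unsigned incidence matrix into the signed one, so its kernel also has dimension $N_\eset-N_\vset+1$.

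Combining the two independent pieces gives $\dim E(n^2\pi^2)=1+(N_\eset-N_\vset+1)=N_\eset-N_\vset+2$. The main obstacle is Step~3: carefully tracking the $(-1)^{n+1}$ that appears when the local sine coefficient is reexpressed in coordinates based at the opposite endpoint, and then seeing that in the odd case the resulting unsigned incidence matrix has the same nullity as the signed one exactly because $\graph$ is bipartite — without bipartiteness the odd case collapses and the stated dimension fails.
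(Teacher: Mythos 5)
Your proof is correct. Structurally it parallels the paper's: both split $E(n^2\pi ^2)$ into a one-dimensional piece determined by the vertex values (your cosine piece; in the paper, the eigenfunction $\cos (n\pi x)$ together with the opening observation that an eigenfunction of eigenvalue $n^2\pi ^2$ vanishing at one vertex vanishes at all) and the subspace $E_0(n^2\pi ^2)$ of eigenfunctions vanishing at every vertex (your sine piece). The real difference is how $\dim E_0 = N_{\eset} - N_{\vset} + 1$ is computed. The paper exhibits an explicit basis --- the functions $f_m$ equal to $\sin (n\pi x)$ on the fundamental cycles $C_m$ relative to a spanning tree --- and shows they span by subtracting them off and then peeling boundary vertices of the tree. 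You instead identify $E_0$ with the null space of an incidence matrix of $\graph$, signed for $n$ even and unsigned for $n$ odd, and invoke the standard rank facts, with the clean observation that for a bipartite graph negating the rows of one color class converts the unsigned incidence matrix into a signed one (equivalently, orienting every edge from $\vset _R$ to $\vset _B$). Your route makes the role of bipartiteness maximally transparent --- it enters exactly once in the cosine piece for odd $n$ (consistency of the $2$-coloring) and once in the sine piece for odd $n$ (nullity of the unsigned incidence matrix) --- and shows precisely where the count would fail for a non-bipartite graph; the paper's route has the advantage of producing the explicit cycle-supported basis corresponding to $Z_0(\graph )$, which it sets up beforehand and reuses conceptually. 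One trivial point: with $B_n$ of size $N_{\vset} \times N_{\eset}$ and $b$ an edge-indexed column vector, the system should read $B_n b = 0$ rather than $B_n^{T} b = 0$; this does not affect the argument.
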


\begin{proof} 
First note that if $y$ is an eigenfunction of $\dop $ with eigenvalue $n^2\pi ^2$ that  vanishes at any vertex, then it vanishes at all vertices.
Let $E_0(n^2\pi ^2) \subset E(n^2\pi ^2)$ denote those eigenfunctions of $\dop $ with eigenvalue $n^2\pi ^2$ 
vanishing at the vertices.

Since $\graph $ is bipartite all cycles $C_m$ are even.
Suppose $C_m$ has distinct vertices $v_0,\dots ,v_{2k-1}$ and edges $\{v_{j-1},v_{j}\}$ for $j  = 1,\dots ,2k-1$ and $\{ v_{2k-1},v_0\}$.
Pick coordinates $x$ identifying $\{v_{j-1},v_j\}$ with $[j-1,j]$ and $\{ v_{2k-1},v_0\}$ with $[2k-1,2k]$.
The functions $f_m$ with values $\sin(n\pi x)$ on the edges of $C_m$ and $0$ on all other edges are independent eigenfunctions of $\dop $. 

Now suppose that $y \in E_0(n^2\pi ^2)$. By subtracting a linear combination 
$ \sum a_mf_m $ we may assume that $y$ vanishes on the edges $e_m$. 
If $v$ is a boundary vertex $v$ of the spanning tree $\tree$ with $v$ in edge $e $ of  $\tree$, then $y = 0$ on all edges of $\graph $ incident on $v$ except possibly $e$.
But the vertex conditions \eqref{vcond1} force $y$ to vanish on $e$ as well, and thus on all of $\tree$.  
So ${\rm dim}E_0(n^2\pi ^2) = {\rm dim}Z_0(\graph ) = N_{\eset} - N_{\vset} + 1$.

Finally, identify each edge with $[0,1]$ so that the $R$ vertices are $0$ and the $B$ vertices are $1$. 
If $\lambda = n^2\pi ^2$, an eigenfunction vanishing at no vertices is given by $\cos (n \pi x)$ on each edge.
\end{proof}

\begin{thm} \label{SpecB} If $\graph $ is finite and bipartite, then the eigenvalues $\lambda $ of $\dop$ satisfy

(i) there are precisely $N_{\vset} - 2$ eigenvalues with $0 < \lambda < \pi ^2 $, counted with
multiplicity.  

(ii) there are precisely $N_{\vset} - 2$ eigenvalues with $\pi ^2 < \lambda < (2\pi )^2 $, counted with
multiplicity, and these have the form $\lambda = (2\pi - \mu )^2$ for $\mu $ an eigenvalue of $\dop $
with $0 < \mu < \pi ^2 $; $\lambda $ and $\mu $ have the same multiplicity. 

\end{thm}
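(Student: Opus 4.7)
The plan is to apply Proposition \ref{combspec}, which identifies the eigenvalues $\lambda$ of $\dop$ outside $\{n^2\pi^2\}$ with the eigenvalues $\nu = 1-\cos(\sqrt{\lambda})$ of the discrete Laplacian $\Delta$, preserving geometric (hence, since $\dop$ is self-adjoint, algebraic) multiplicity. The whole argument then reduces to counting eigenvalues of $\Delta$ in $(0,2)$ and inverting a trigonometric map.

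First I would recall the spectral structure of $\Delta$ for a connected bipartite graph. Since $\Delta$ is similar to the symmetric normalized Laplacian $I - T^{-1/2}AT^{-1/2}$, its spectrum is real and contained in $[0,2]$. Connectedness of $\graph$ forces $0$ to be a simple eigenvalue, with eigenspace the constants; bipartiteness together with connectedness forces $2$ to be a simple eigenvalue, with eigenfunction $\pm 1$ on the two vertex classes. Consequently $\Delta$ has exactly $N_{\vset}-2$ eigenvalues in the open interval $(0,2)$, counted with multiplicity.

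Next I would analyze the map $\lambda \mapsto 1 - \cos(\sqrt{\lambda})$. The function $\omega \mapsto 1-\cos(\omega)$ is a strictly increasing bijection from $(0,\pi)$ onto $(0,2)$, and since $\cos(2\pi - \omega) = \cos(\omega)$ it is also a bijection (strictly decreasing in $\omega$) from $(\pi,2\pi)$ onto $(0,2)$. Therefore $\lambda \mapsto 1 - \cos(\sqrt{\lambda})$ maps each of $(0,\pi^2)$ and $(\pi^2,4\pi^2)$ bijectively onto $(0,2)$. If $\lambda_1 \in (0,\pi^2)$ and $\lambda_2 \in (\pi^2,4\pi^2)$ produce the same value of $\nu$, then $\sqrt{\lambda_2} = 2\pi - \sqrt{\lambda_1}$, i.e., $\lambda_2 = (2\pi - \sqrt{\lambda_1})^2$, which is the pairing asserted in (ii) (with $\mu$ in the theorem statement playing the role of $\sqrt{\lambda_1}$).

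Part (i) then follows immediately from Proposition \ref{combspec}: the eigenvalues of $\dop$ in $(0,\pi^2)$ are in multiplicity-preserving bijection with the $N_{\vset}-2$ eigenvalues of $\Delta$ in $(0,2)$. Part (ii) follows by the same argument applied on $(\pi^2,4\pi^2)$, together with the pairing of the previous paragraph; the multiplicities of $\lambda$ and of the corresponding $\mu$-eigenvalue from (i) agree because both equal the multiplicity of the shared $\nu$. The only nontrivial ingredient is the simplicity of $0$ and $2$ in the spectrum of $\Delta$, which is a standard fact about the normalized Laplacian on connected bipartite graphs; one may invoke Chung's reference or, if desired, check it directly by solving $T^{-1}A f = f$ and $T^{-1}A f = -f$ using the bipartition and connectedness of $\graph$.
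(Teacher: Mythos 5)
Your proof is correct and follows essentially the same route as the paper: both count the $N_{\vset}-2$ eigenvalues of $\Delta$ in the open interval $(0,2)$ (using that $0$ and $2$ are simple for a connected bipartite graph) and then transfer them to $\dop$ via Proposition~\ref{combspec} together with the symmetry of $1-\cos(\sqrt{\lambda})$ about $\sqrt{\lambda}=\pi$. Your write-up is somewhat more explicit about the bijectivity of $\omega\mapsto 1-\cos(\omega)$ on $(0,\pi)$ and $(\pi,2\pi)$ and about why geometric and algebraic multiplicities coincide, but the substance is identical.
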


\begin{proof} 

Since $\graph $ is bipartite \cite[p. 7]{Chung}, $\Delta $ has eigenvalues $\nu _0 = 0, \nu _{N_\vset -1} = 2$, both with multiplicity $1$.
All eigenvalues $\nu $ of $\Delta $ satisfy $0 \le \nu \le 2$.  Thus there are $N_{\vset} - 2 $ eigenvalues 
$\nu $ of $\Delta $ with $0 < \nu < 2$.  These correspond to eigenvalues $\lambda $ of $\dop $ with
$0 < \sqrt{\lambda } <  \pi $.  Since $1 - \cos (\sqrt{\lambda } )$ is symmetric about $\sqrt{\lambda} = \pi$, conclusion (ii) holds. 

\end{proof}

Let $\alpha _k^2$, $k = 1,\dots , 2N_{\vset} - 4$ be the eigenvalues of $\dop $, with multiplicity, satisfying $0 < \alpha _k^2 < (2\pi )^2$
and $\alpha _k^2 \not= \pi ^2$.  The description of the eigenvalues of $\dop $ implies that the trace of $R_{\graph }(\lambda )$ can be rewritten as
\[ {\rm tr}(R_{\graph}(\lambda )) = - \frac{1}{\lambda} + \sum_{k=1}^{2N_{\vset} - 4} \sum _{m = -\infty}^{\infty}  \frac{1}{ [\alpha _k + 2m\pi ]^2 - \lambda }
+ (N_{\eset} - N_{\vset} + 2)\sum_{m=1}^{\infty}  \frac{1}{ [m\pi ]^2 - \lambda }\]

A standard residue calculation will be sketched to evaluate these sums; more details are in \cite[p.149]{Marshall}. 
The function $\pi \cot (\pi z)/ [(\alpha + 2\pi z)^2 - \lambda ]$ has simple poles with residues $1/ [(\alpha + 2\pi m)^2 - \lambda ]$
at the integers $m$, and simple poles at $z = (- \alpha \pm \sqrt{\lambda } )/(2\pi )$.  Integrate over the contours $\gamma _N$
which bound the squares with vertices $(N+1/2) (\pm 1 \pm i)$ to get
\[0 = \lim _{N \to \infty} \frac{1}{2\pi i} \int_{\gamma _N}  \frac{\pi \cot (\pi z)}{(\alpha + 2\pi z)^2 - \lambda } \ dz
= \sum_{m = -\infty}^{\infty} \frac{1}{ (\alpha + 2\pi m)^2 - \lambda } \]
\[+ \frac{1 }{4\sqrt{\lambda } } \cot (\frac{-\alpha + \sqrt{\lambda}}{2})
- \frac{1 }{4\sqrt{\lambda } } \cot (\frac{-\alpha - \sqrt{\lambda}}{2}). \]
Trigonometric identies give
\begin{equation} \label{res1}
\sum_{m=-\infty}^{\infty} \frac{1}{[\alpha + 2\pi m  ]^2 -\lambda  }
 =  \frac{\sin(\sqrt{\lambda  })}{2 \sqrt{\lambda  }}
\frac{ 1} { \cos(\sqrt{\lambda  }) - \cos(\alpha ) } 
\end{equation}

Similarly,
\[\sum_{m=1}^{\infty}  \frac{1}{ [m\pi ]^2 - \lambda } = \frac{1}{2\lambda } - \frac{\cot(\sqrt{\lambda })}{2\sqrt{\lambda }},\]
and so
\begin{equation} \label{trace1b}
 {\rm tr}(R_{\graph} (\lambda )) =  \frac{N_{\eset} - N_{\vset} }{2\lambda} + \frac{\sin(\sqrt{\lambda })}{2 \sqrt{\lambda }}
 \sum_{k=1}^{2N_{\vset} - 4} \frac{1}{\cos (\sqrt{\lambda }) - \cos (\alpha _k)} 
 \end{equation}
\[ - (N_{\eset} - N_{\vset} + 2)\frac{\cot(\sqrt{\lambda })}{2\sqrt{\lambda }}\]

\section{Biregular graphs}

A graph $\graph $ is bipartite if the vertex set $\vset$ can be partitioned into two subsets, $\vset _R$ and $\vset _B$, and 
all edges $e$ of the edgeset $\eset$ have the form $e = \{ r,b \}$ with $r \in \vset _R$ and $b \in \vset _B$.  
$\graph $ is biregular if the degree of each vertex depends only on the vertex class, $R$ or $B$. 
A simple example is the complete bipartite graph.   Another important example is the biregular tree $\tree$,
with vertices in $\vset _B$ having degree $ \delta _B +1 > 1$, and vertices in $\vset _R$ have degree $ \delta _R +1 > 1$.

Here are two ways of generating biregular graphs from a regular graph $\Gamma $.  Suppose $\vset _1$ and $\vset _2$ are two copies
of the vertex set of $\Gamma $.  The bipartite cover $\widetilde \Gamma $ of $\Gamma $ has vertex set the disjoint union of $\vset _1$ and $\vset _2$.
If $v_1 \in \vset _1$ and $v_2 \in \vset _2$, then there is an edge $\{v_1,v_2 \} $ in $\widetilde \Gamma $ if and only if the corresponding vertices
$v_1,v_2$ in $\Gamma $ are adjacent.  $\widetilde \Gamma $ is connected if and only if $\Gamma $ is connected and not bipartite.
The second method works well for quantum graphs.  For each edge $e$ of $\Gamma $, simply add a vertex $v$ to the middle of $e$,
using the continuity and derivative condition of \eqref{vcond1}.  The vertex $v$ will have degree $2$, but the domain of $\dop $ is not affected
\cite[p. 14]{BK}.  When all edges are required to have length $1$, the edges of the original graph effectively have length $2$.
Rescaling of edge lengths for a finite graph by a factor $2$ converts eigenvalues of $\dop $ from $\lambda _n$ to $\lambda _n/4$. 
In this fashion the spectral theory of regular quantum graphs is subsumed into the 
spectral theory of biregular quantum graphs.

Computations for the complete bipartite graphs will be useful later.  The complete bipartite graph $K(m_B,m_R)$ has $m_B$ vertices in $\vset _B$ and
$m_R$ vertices in $\vset _R$, with edges $\{ v,w \}$ whenever $v \in \vset _B$ and $w \in \vset _R$.   To compute the $\Delta $ spectrum for $K(m_B,m_R)$, 
index the $R$ vertices as $v_1,\dots , v_{m_R}$ and the $B$ vertices as $v_{m_R+1} , \dots , v_{m_B+m_R}$.
The matrix $T^{-1}A$ of \eqref{Deltadef} is 
\[T^{-1}A = \begin{pmatrix} 0_{m_R,m_R} & C_1 \cr C_2 & 0_{m_B,m_B}, 
\end{pmatrix} \]
where $C_1$ is an $m_R \times m_B$ block with all entries $1/m_B$ and $C_2$ is an $m_B \times m_R$ block with all entries $1/m_R$.
Since 
\[(T^{-1}A)^2 =  \begin{pmatrix} C_1C_2 & 0_{m_R,m_B} \cr  0_{m_B,m_R} & C_2C_1 
\end{pmatrix}, \]
and ${\rm tr}T^{-1}A = 0$ one finds that 
$0$ is an eigenvalue of $T^{-1}A$ with multiplicity $m_R+m_B-2$, while the other eigenvalues are
$\pm 1$.
Thus the eigenvalues of $\Delta = I - T^{-1} A$ are $1$ with multiplicity $m_R+m_B-2$, $0$ and $2$.

By \propref{combspec} the eigenvalues $\lambda \not= n^2\pi^2 $ of the corresponding differential operator $\dop $ are
$(\frac{\pi}{2} + n\pi )^2$ for $n=0,1,2,\dots$, each with multiplicity $m_R+m_B-2$.  By \thmref{SpecA} the multiplicity of $\lambda = n^2\pi ^2$
for $n = 1,2,3,\dots $ is $m_Rm_B - (m_R+m_B) + 2$.  Finally $\lambda = 0$ is a eigenvalue with multiplicity $1$.
In particular, $\cos (\alpha _k) = 0$.  Letting  ${\rm tr}R_{CB}(\lambda )$ denote the trace of the resolvent for $\dop $ on a complete bipartite graph, 
\eqref{trace1b} gives
 \begin{equation} \label{comptrace}
 {\rm tr}R_{CB}(\lambda ) =  \frac{m_Rm_B - (m_R+m_B) }{2\lambda} +  (\delta_R+ \delta _B )\frac{\tan(\sqrt{\lambda })}{\sqrt{\lambda }} 
  \end{equation}
\[  - (m_Rm_B - (m_R+m_B) + 2 )\frac{\cot(\sqrt{\lambda })}{2\sqrt{\lambda }}.\]

\section{Biregular trees}

\subsection {Multipliers} 

Now consider the case of the biregular tree $\tree$
whose edges all have length $1$.  Vertices in $\vset _B$ have degree $d_B = \delta _B +1 > 1$,
and vertices in $\vset _R$ have degree $d_R = \delta _R +1 > 1$,

Fix a vertex $r_0$, identified with $0 \in \real$, and an initial edge $e_0 = \{ r_0,b_0 \}$, which is identified with the interval  $[0,1]$. 
Two classes of rays, extending from the edge $e_0$, will be used: one whose vertex sequence starts $r_0,b_0$, the other starting with $b_0,r_0$.
The rays rooted at $r_0$ are subgraphs of $\tree$ with vertex set $\{ r_0,b_0, r_1,b_1,\dots \}$,  indexed by nonnegative integers $n$, with edges $\{ r_n,b_n \}$ and $\{ b_n, r_{n+1} \}$.  
The rays rooted at $b_0$ will have a vertex sets $\{ b_0,r_0,b_{-1},r_{-1}\dots \}$ indexed by nonpositive integers $n$, 
with edges $\{ b_n,r_n \}$ and $\{r_{n}, b_{n-1} \}$ .
As metric graphs, a ray rooted at $r_0$ may be identified with $[0,\infty )$, and a ray rooted at
$b_0$ may be identified with $(-\infty , 1]$.
With these identifications, even integers $2n$ correspond to vertices $r_n$ and odd integers $2n+1$ correspond to vertices $b_n$.  Let $\tree _0$ (resp. $\tree _1$) denote the subtree of $\tree $ obtained as the union of the rays
rooted at $r_0$ (resp. $b_0$).  

Solutions $y(x,\lambda )$ of 
\begin{equation} \label{1.a}
-D^2y = \lambda y
\end{equation}
on $[0,1] $ will be extended to $\tree _0$ (resp. $\tree _1$), 
with values depending only on $x \in [0,\infty )$ (resp. $x \in (-\infty , 1]$).
These extended solutions will be continuous at each vertex, with \eqref{vcond1} satisfied.
Such solutions $y(x,\lambda )$ 
may be continued from edge $e_0 $ along the indicated rays by 
using the following jump conditions at integers $x$. The continuity requirement is $y(x^-,\lambda ) = y(x^+,\lambda )$.
For rays rooted at $r_0$ the derivative conditions are satisfied by taking 
\begin{equation} \label{jumps0}
\begin{matrix} y'(x^+,\lambda ) = y'(x^-,\lambda )/\delta _R, & x > 0, {\rm even}, \cr
y'(x^+,\lambda ) = y'(x^-,\lambda )/\delta _B, & x > 0, {\rm odd}, 
\end{matrix}
\end{equation}
while for rays rooted at $b_0$ the conditions are 
\begin{equation} \label{jumps1}
\begin{matrix} y'(x^-,\lambda ) = y'(x^+,\lambda ) /\delta _R, & x < 1, {\rm even}, \cr
y'(x^-,\lambda ) = y'(x^+,\lambda )/\delta _B, & x < 1, {\rm odd}.
\end{matrix}
\end{equation}
The propagation of initial data $y(v,\lambda ),y'(v,\lambda )$ for the extended solutions is described by transition matrices. 
When initial data is decomposed using the eigenvectors of the transition matrix, the eigenvalues act as multipliers.  

Let $\omega = \sqrt{\lambda }$, with $\omega \ge 0$ for $\lambda \ge 0$ and the square root taken continuously 
for $\lambda \in \complex \setminus i (-\infty , 0)$.
On the initial edge $e_0$ the equation \eqref{1.a} has a solution basis  
$\cos (\omega x), \sin (\omega x)/\omega $.  At $x = 0^+$ this basis satisfies
\begin{equation}
\begin{pmatrix} \cos (\omega 0^+) & \sin (\omega 0^+)/\omega \cr -\omega \sin (\omega 0^+) & \cos (\omega 0^+) \end{pmatrix}
= \begin{pmatrix} 1 & 0 \cr 0&1 \end{pmatrix}.
\end{equation}
The abbreviations 
\[c(\lambda ) = \cos (\omega ), \quad  c'(\lambda ) = -\omega \sin (\omega ) \]
\[s(\lambda ) = \sin (\omega )/\omega , \quad s'(\lambda ) = \cos (\omega ) \]
will be used.

Suppose a solution $y$ of (\ref{1.a}) satisfies $y(0^+,\lambda  ) = a$ and $y'(0^+, \lambda  ) = b$.
At $x = 1^-$ the solution will have values given by
$$\begin{pmatrix} y(1^-,\lambda  ) \cr y'(1^-,\lambda  )\end{pmatrix} 
= M_0(\lambda  )\begin{pmatrix}a \cr b\end{pmatrix}, \quad 
 M_0(\lambda  ) = \begin{pmatrix} c(\lambda ) & s(\lambda ) \cr
c'(\lambda ) & s'(\lambda ) \end{pmatrix}.$$
By \eqref{jumps0}, the initial data at $x =1^+$ is then 
\[\begin{pmatrix} y(1^+,\lambda  ) \cr y'(1^+,\lambda  )\end{pmatrix} = J_{0,B}\begin{pmatrix} y(1^-,\lambda  ) \cr y'(1^-,\lambda  )\end{pmatrix}, \quad  
J_{0,B} =  \begin{pmatrix}1 & 0 \cr 0  & 1/\delta _B \end{pmatrix}.\]
Initial data propagation from $x=1^+$ to $x=2^-$ is again given by multiplication by $M_0(\lambda )$, while  
\[\begin{pmatrix} y(2^+,\lambda  ) \cr y'(2^+,\lambda  )\end{pmatrix} = J_{0,R}\begin{pmatrix} y(2^-,\lambda  ) \cr y'(2^-,\lambda  )\end{pmatrix}, \quad  J_{0,R} =  \begin{pmatrix}1 & 0 \cr 0 & 1/\delta _R \end{pmatrix}.\]
Continuing in this fashion, the propagation of initial data from $x = 2n^+$ to $x = (2n+2)^+$ for $n \ge 0$, is given by
\[ T_0(\lambda ) = J_{0,R}M_0J_{0,B}M_0 \]
\[= \begin{pmatrix}1 & 0 \cr 0 & 1/\delta _R \end{pmatrix}
\begin{pmatrix} c(\lambda ) & s(\lambda ) \cr
c'(\lambda ) & s'(\lambda ) \end{pmatrix}  \begin{pmatrix}1 & 0 \cr 0  & 1/\delta _B \end{pmatrix}
\begin{pmatrix} c(\lambda ) & s(\lambda ) \cr
c'(\lambda ) & s'(\lambda ) \end{pmatrix} \]
\[= \begin{pmatrix} c^2 + sc' /\delta _B  &  cs + ss'/ \delta _B 
 \cr
 c'c/\delta _R + s'c'/(\delta _R\delta _B) & sc'/\delta _R  
+  (s')^2/(\delta _R\delta _B)
\end{pmatrix} \]
Note that 
\begin{equation} \label{trace0}
\det (T_0(\lambda ) ) = \det (J_{0,R}M_0J_{0,B}M_0) = \frac{1}{\delta _B \delta _R}, \end{equation}
\[trT_0(\lambda )  
= \cos ^2(\omega )(1 + \frac{1}{\delta _B\delta _R}) - \sin ^2(\omega ) (\frac{1}{\delta _R} + \frac{1}{\delta _B} ).\]

Next, treat the behavior of similar solutions which start at $x=1^-$ and extend to decreasing values of $x$.
Still using the basis $\cos (\omega x), \sin (\omega x)/\omega $, the matrix taking initial data from $x=1^-$ to $x=0^+$ is
\[M_1(\lambda  ) = M_0^{-1}(\lambda  ) = 
\begin{pmatrix} s'(\lambda ) & -s(\lambda ) 
\cr -c'(\lambda ) & c(\lambda ) \end{pmatrix}.\]
From \eqref{jumps1} the jump in initial data from $x = 0^+$ to $x=0^-$ is given by 
\[J_{1,R} =  \begin{pmatrix}1 & 0 \cr 0 & 1/\delta _R \end{pmatrix}.\]
Similarly, the jump from $x = -1^+$ to $x=-1^-$ is given by
\[J_{1,B} =  \begin{pmatrix}1 & 0 \cr 0  & 1/\delta _B \end{pmatrix}.\]
More generally, the propagation of initial data from $x = (2n+1)^-$ to $x = (2n-1)^-$ for $n \le 0$, is given by
\[ T_1(\lambda ) = J_{1,B}M_1J_{1,R}M_1 \]
\[= \begin{pmatrix}1 & 0 \cr 0  & 1/\delta _B \end{pmatrix}
\begin{pmatrix} s'(\lambda ) & -s(\lambda ) \cr
-c'(\lambda ) & c(\lambda ) \end{pmatrix}  
\begin{pmatrix}1 & 0 \cr 0 & 1/\delta _R \end{pmatrix}
\begin{pmatrix} s'(\lambda ) & -s(\lambda ) \cr
-c'(\lambda ) & c(\lambda ) \end{pmatrix} \]
\[= \begin{pmatrix} (s')^2 + sc'/ \delta _R & -s's - sc/ \delta _R  \cr
-c's'/\delta _B - cc'/(\delta _B\delta _R) & c's /\delta _B 
+  c^2/(\delta _B\delta _R) 
\end{pmatrix} \]

Again $\det (T_1(\lambda )) = \frac{1}{\delta _B \delta _R}$ and ${\rm tr}(T_1(\lambda )) = {\rm tr}(T_0(\lambda ))$
Since the transition matrices $T_j(\lambda )$ have the same determinants and traces,
they have the same eigenvalues 
\begin{equation} \label{quadform} 
\mu ^{\pm}(\lambda ) = {\rm tr} T_j/2 \pm \sqrt{{\rm tr}(T_j)^2/4 - \det(T_j)},
\end{equation}
Again, take the square root nonnegative when the argument is nonnegative, and continuous 
if the argument is in $\complex \setminus i (-\infty , 0)$.  

Suppose the $2 \times 2$ matrix 
\[ \begin{pmatrix} a & b \cr c & d \end{pmatrix} \]
has eigenvalues $\mu ^{\pm}$.  Then
\[  \begin{pmatrix} a - \mu ^{\pm} & b \cr c & d -\mu ^{\pm} \end{pmatrix} \begin{pmatrix} b \cr \mu ^{\pm} - a \end{pmatrix} = \begin{pmatrix} 0 \cr 0 \end{pmatrix},\]
so the vectors $[b, \mu ^{\pm} - a]$ are eigenvectors.
Using $s'(\lambda ) = c(\lambda )$ these eigenvectors for $T_j(\lambda )$ are  
\begin{equation} \label{evecs}  
E_0^{\pm}(\lambda ) = \begin{pmatrix} sc + sc/\delta _B
\cr \mu ^{\pm} - c^2 - sc'/\delta _B \end{pmatrix}, \quad  E_1^{\pm}(\lambda ) =  \begin{pmatrix} -sc - sc/\delta _R
\cr \mu ^{\pm} - c^2 - sc'/ \delta _R \end{pmatrix} 
\end{equation}
The eigenvectors $E_0^{\pm}(\lambda ), E_1^{\pm}(\lambda )$ are nonzero except for a discrete set of $\lambda \in \real $.

When $\lambda  $ is real the matrices $T_j(\lambda  )$ are real-valued.   Then
if ${\rm tr}(T_j)^2/4 - \det (T_j)$ is nonpositive the eigenvalues are conjugate pairs, with
\begin{equation} \label{eval1} |\mu ^{\pm}(\lambda ) | = \frac{1}{\sqrt{\delta _B \delta _R}}, \quad \frac{- 2}{\sqrt{\delta _B\delta _R }} \le {\rm tr}(T_j) \le \frac{2}{\sqrt{\delta _B\delta _R} }.
\end{equation}
If ${\rm tr}(T_j)^2/4 - \det (T_j)$ is nonnegative, the eigenvalues are real.
Since $\mu ^+ \mu ^- = 1/(\delta _B\delta _R ) $, the eigenvalues  
have the same sign.

\begin{lem} \label{asyneg}
The eigenvalues $\mu ^{\pm} (\lambda )$ satisfy  
\begin{equation} \label{muest}
\lim_{\lambda \to -\infty } e^{-2|{\rm Im} \sqrt{\lambda }|} \mu ^+(\lambda ) =  \frac{\delta _R +1}{2\delta _R}  \frac{\delta _B + 1}{2\delta _B}, 
\quad  \lim_{\lambda \to -\infty } e^{2|{\rm Im}\sqrt{\lambda }|} \mu ^-(\lambda ) = \frac{2}{\delta _R + 1}  \frac{2}{\delta _B + 1}
\end{equation}
\end{lem}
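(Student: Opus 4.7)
The plan is to let $\lambda \to -\infty$ along the real axis, so that with the chosen branch $\omega = \sqrt{\lambda} = i\tau$ where $\tau = \sqrt{|\lambda|} = |\Im\sqrt{\lambda}|$. Under this substitution $\cos(\omega) = \cosh(\tau)$ and $\sin^2(\omega) = -\sinh^2(\tau)$, so the expression computed above for ${\rm tr}(T_j)$ becomes
\[
{\rm tr}(T_j(\lambda)) = \cosh^2(\tau)\Bigl(1 + \tfrac{1}{\delta_R\delta_B}\Bigr) + \sinh^2(\tau)\Bigl(\tfrac{1}{\delta_R} + \tfrac{1}{\delta_B}\Bigr).
\]
Using $\cosh^2(\tau), \sinh^2(\tau) = e^{2\tau}/4 + O(1)$ as $\tau \to \infty$, together with the algebraic identity
\[
\Bigl(1 + \tfrac{1}{\delta_R\delta_B}\Bigr) + \Bigl(\tfrac{1}{\delta_R} + \tfrac{1}{\delta_B}\Bigr) = \Bigl(1 + \tfrac{1}{\delta_R}\Bigr)\Bigl(1 + \tfrac{1}{\delta_B}\Bigr),
\]
this yields ${\rm tr}(T_j(\lambda)) = \tfrac{e^{2\tau}}{4}\bigl(1+\delta_R^{-1}\bigr)\bigl(1+\delta_B^{-1}\bigr) + O(1)$.

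With the trace growing exponentially and the determinant frozen at $1/(\delta_R\delta_B)$, the discriminant ${\rm tr}(T_j)^2/4 - \det(T_j)$ in \eqref{quadform} is positive for $-\lambda$ large, so both $\mu^{\pm}$ are real, and from $\mu^+\mu^- = \det(T_j)$ and $\mu^+ + \mu^- = {\rm tr}(T_j)$ the larger root satisfies $\mu^+(\lambda) = {\rm tr}(T_j)(1 + o(1))$ while $\mu^-(\lambda) = \det(T_j)/\mu^+(\lambda)$. Dividing by $e^{2\tau}$ and $e^{-2\tau}$ respectively then gives
\[
e^{-2\tau}\mu^+(\lambda) \to \tfrac{1}{4}\bigl(1+\delta_R^{-1}\bigr)\bigl(1+\delta_B^{-1}\bigr) = \tfrac{\delta_R+1}{2\delta_R}\cdot\tfrac{\delta_B+1}{2\delta_B},
\]
\[
e^{2\tau}\mu^-(\lambda) \to \tfrac{4}{(1+\delta_R)(1+\delta_B)} = \tfrac{2}{\delta_R+1}\cdot\tfrac{2}{\delta_B+1},
\]
which are exactly the claimed limits.

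The only step that requires any care is verifying that the $+$ sign in \eqref{quadform} really does pick out the dominant eigenvalue in this regime. Since ${\rm tr}(T_j) > 0$ and tends to $+\infty$, while $\det(T_j) > 0$ stays bounded, the nonnegative square root of ${\rm tr}(T_j)^2/4 - \det(T_j)$ is close to ${\rm tr}(T_j)/2$; the $+$ branch is therefore the one that grows like ${\rm tr}(T_j)$, consistent with the labeling used in the statement. With this sign bookkeeping in place, everything reduces to the elementary asymptotics of $\cosh$ and $\sinh$, and no genuine obstacle arises.
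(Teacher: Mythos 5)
Your proposal is correct and follows essentially the same route as the paper: both compute the exponential asymptotics of ${\rm tr}(T_j(\lambda))$ from the explicit trace formula in \eqref{trace0}, obtain the limit $\frac{1}{4}(1+\delta_R^{-1})(1+\delta_B^{-1})$ for $e^{-2|{\rm Im}\sqrt{\lambda}|}{\rm tr}(T_j)$, read off the $\mu^+$ asymptotics from \eqref{quadform}, and deduce the $\mu^-$ limit from $\mu^+\mu^- = 1/(\delta_R\delta_B)$. Your extra paragraph checking that the $+$ branch of \eqref{quadform} is the dominant root is a detail the paper leaves implicit, but it is the same argument.
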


\begin{proof}

For $\lambda < 0$ let $\omega = \sqrt{\lambda }$ as before.  In exponential form
\[ c(\lambda ) = (e^{i\omega } + e^{-i \omega})/2 =  (e^{-|\omega |} + e^{|\omega |})/2
, \quad c'(\lambda ) = - \omega (e^{i\omega } - e^{-i\omega})/2i,\]
\[s(\lambda ) = (e^{i\omega  } - e^{-i \omega })/(2i\omega ) = (e^{-|\omega |} - e^{|\omega |})/(2i\omega ), \]

Since $ {\rm tr} T_0(\lambda ) =  {\rm tr} T_1(\lambda )$, equation \eqref{trace0} gives  
\[ \lim_{\lambda \to -\infty} e^{-2|{\rm Im} \sqrt{\lambda }|} {\rm tr}\,(T_j)  = \frac{1}{4} [
1 + \frac{1}{\delta _R}  + \frac{1}{\delta _B} + \frac{1}{\delta _R\delta _B}] = \frac{\delta _R +1}{2\delta _R}  \frac{\delta _B + 1}{2\delta _B}\]
\eqref{quadform} first gives the estimate for $\mu ^+(\lambda )$, with the estimate for $\mu ^-(\lambda )$ following from 
$\mu ^+\mu ^- = \frac{1}{\delta _R\delta _B}$.
\end{proof} 

\begin{lem}\label{cross} \it If $ | \mu ^{\pm}(\lambda  ) | = 1/\sqrt{\delta _B\delta _R}$, 
then $\lambda  $ is in the spectrum of ${\cal L}$, and so is real.\end{lem}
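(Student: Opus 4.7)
The plan is to produce a Weyl singular sequence for $\dop - \lambda I$: a sequence $f_N \in \domain$ with $\|f_N\| = 1$ and $\|(\dop - \lambda I) f_N\| \to 0$. Its existence forces $\lambda$ into the spectrum of $\dop$, and since $\dop$ is self-adjoint the spectrum lies in $\real$, giving the reality claim at no extra cost.

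I would first build the candidate from the "radial" solution on $\tree _0$. Using the eigenvector $E_0^{\pm}(\lambda)$ of \eqref{evecs} as initial data on $e_0$ and propagating by \eqref{jumps0}, extend a solution $y(x,\lambda)$ of $-D^2 y = \lambda y$ throughout $\tree _0$ so that its value on any edge depends only on the distance $x$ from $r_0$. Then $y$ satisfies \eqref{vcond1} at every vertex by construction, and the initial data at $x = (2k)^+$ is $(\mu^{\pm})^k$ times the initial data on $e_0$. (The discrete real set on which $E_0^{\pm}$ vanishes can be avoided at the start and then recovered at the end by closedness of the spectrum.)

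A count of edges in $\tree _0$ gives exactly $(\delta _B\delta _R)^k$ edges at even distance $2k$ from $e_0$ (with an extra factor of $\delta _B$ at odd distances), so the $L^2$ mass of $y$ over a full two-edge period of levels is proportional to $|\mu^{\pm}|^{2k}(\delta _B\delta _R)^k$. The hypothesis $|\mu^{\pm}|^2(\delta _B\delta _R) = 1$ makes this a positive constant independent of $k$, so the total $L^2$ mass of $y$ on the edges at distance at most $2N$ grows linearly in $N$. Now introduce a scalar cutoff $\chi _N(x)$ depending only on $x$, equal to $1$ for $x \in [N, 2N]$ and $0$ outside $(N-1, 2N+1)$, taken smooth and \emph{locally constant in a neighborhood of every integer} $x$ lying in the transition strips. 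Local constancy at each vertex ensures that $\chi _N y$ either agrees with $y$ or vanishes identically in a neighborhood of that vertex, so $f_N := \chi _N y \in \domain$. The product rule gives
\[ (\dop - \lambda I) f_N \;=\; -\chi _N''\, y \;-\; 2\chi _N'\, y', \]
supported on just the two one-level-thick transition strips. The same counting makes each strip contribute $O(1)$ to the $L^2$ norm while $\|f_N\|^2 \sim N$, so $f_N / \|f_N\|$ is the desired Weyl sequence.

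The main obstacle is keeping $f_N$ inside $\domain$ while cutting off: a generic smooth radial cutoff destroys the jump conditions implicit in \eqref{vcond1}, and a cutoff rescaled at each vertex to restore them would wreck the clean norm estimates. The device of making $\chi _N$ locally constant near every integer sidesteps this cleanly, reducing the error $(\dop - \lambda I) f_N$ to the commutator $[-D^2,\chi _N] y$, whose norm is then controlled by exactly the amplitude-versus-branching balance that produced the linear growth of $\|f_N\|^2$.
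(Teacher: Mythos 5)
Your proposal is correct and follows essentially the same route as the paper: a Weyl singular sequence obtained by truncating the radial solution built from an eigenvector of $T_0(\lambda)$, with the branching count $(\delta_B\delta_R)^k$ exactly offsetting the decay $|\mu^{\pm}|^{2k}$ so that the norm grows linearly while the cutoff error stays bounded. The only difference is cosmetic — where the paper cites \cite[Lem 3.2]{Carlson97} for the modification near the truncation edges, you make the cutoff explicit via the locally-constant-near-vertices device, which is a clean way to keep $f_N$ in $\domain$.
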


\begin{proof} The cases being similar, suppose that $y$ is a nontrivial 
solution of (\ref{1.a}) on $e_0$ whose initial data at the left endpoint $x = 0$ is an 
eigenvector for $T_0(\lambda  )$ with eigenvalue $\mu ^- $.
Extend $y$ to the subtree $\tree _0$ using $y(x+2) = \mu ^- y(x)$ along each ray based at $r_0$.
The self adjoint conditions \eqref{vcond1} hold for $y$ except possibly at $r_0$.

These extended solutions decay as $x \to \infty $.  For $2(n+1) > 0$ let $\tree _{2n+1}$ denote the subtree of
$\tree _0$ with $x \le (2n+1)$.  Then 
\[\int_{\tree _{2n+1}} | y |^2 \ge \sum_{k=1}^{n} (\delta _B \delta _R)^k \int_{0 \le x \le 2} |(\mu ^-)^k y |^2
= (n-1) \int_{0 \le x \le 2} |y |^2,\]
while $y$ satisfies \eqref{1.a} on $\tree _{2n+1}$.  The functions $y$ on $\tree _{2n+1}$ can be modified \cite[Lem 3.2]{Carlson97}
on the edges with $0 \le x \le 1$ and $2n \le x \le 2n+2$ to put $y$ in the domain of $\dop $ with $\| \dop y \| _2 \le C$.
Then $f = y/\|y\| _2 $ has norm $1$ while  $\| \dop f \| _2 \to 0$ as $n \to \infty $.   
$\lambda  $ is thus in the spectrum of the self adjoint operator $\dop $. 
\end{proof}

By \lemref{cross} the set 
\[ \sigma _1 = \{ \lambda  \in \complex \ \bigl | \ |\mu^{\pm}(\lambda  )| = |\delta _R\delta _B |^{-1/2} \}
= \{ \lambda \in \real \ \big | {\rm tr}(T_j)^2 - 4\det(T_j) \le 0 \} \]
is contained in a half line $[0,\infty )$.
The next result describes the main features of $\mu^{\pm}(\lambda )$.

\begin{thm}\label{dom1} The eigenvalues $\mu^{\pm}(\lambda  )$ may be chosen to be 
single valued analytic functions in the complement of $\sigma _1$ with the asymptotic 
behaviour given in \lemref{asyneg}.  On this domain $|\mu ^+(\lambda  )| > 1/\sqrt{\delta _R\delta _B}$
and $|\mu ^-(\lambda  )| < 1/\sqrt{\delta _R\delta _B}$.
These functions $\mu ^+(\lambda )$ and $\mu ^-(\lambda )$ have continuous extensions to the real axis 
from the upper and lower half planes which are analytic except on the discrete set where ${\rm tr}(T_j)^2 = 4/(\delta _R \delta _B)$.
If $\nu \in \sigma _1$ then
\begin{equation}
\lim_{\epsilon \to 0^+} \mu^{\pm}(\nu + i \epsilon )
- \mu^{\pm}(\nu - i \epsilon ) = 2i\,{\rm Im}\,(\mu^{\pm}(\nu ))\,. 
\label{3.g} \end{equation}
\end{thm}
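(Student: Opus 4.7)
The plan is to treat \eqref{quadform} as the quadratic-root formula for an analytic quadratic and reduce the theorem to a statement about a single square root. By \eqref{trace0} the determinant $\det T_j(\lambda)=1/(\delta_R\delta_B)$ is constant, and ${\rm tr}(T_j(\lambda))$ is entire in $\lambda$, since $\cos^2(\omega)=\tfrac12+\tfrac12\cos(2\sqrt\lambda)$ and $\sin^2(\omega)=\tfrac12-\tfrac12\cos(2\sqrt\lambda)$ are entire in $\lambda$. Hence
\[ D(\lambda) \;=\; \frac{{\rm tr}(T_j(\lambda))^2}{4} \;-\; \frac{1}{\delta_R\delta_B} \]
is entire, and every claim in the theorem reduces to one about the single-valued analyticity, asymptotics, and boundary behavior of $\sqrt{D(\lambda)}$.

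The two characterizations of $\sigma_1$ recorded just before the theorem say exactly that $D(\lambda)\in(-\infty,0]$ if and only if $\lambda\in\sigma_1$. On $\complex\setminus\sigma_1$ the values of $D$ therefore avoid the principal-branch cut $(-\infty,0]$, so the principal square root composed with $D$ produces a holomorphic, zero-free function there. With $\sqrt{D}$ so fixed I set
\[ \mu^+(\lambda)=\tfrac12{\rm tr}(T_j(\lambda))+\sqrt{D(\lambda)},\qquad \mu^-(\lambda)=\tfrac12{\rm tr}(T_j(\lambda))-\sqrt{D(\lambda)}. \]
Since $\mu^+\mu^-=1/(\delta_R\delta_B)$ is a positive constant and $|\mu^+|=|\mu^-|$ can hold only on $\sigma_1$, the two quantities $|\mu^\pm|-1/\sqrt{\delta_R\delta_B}$ have opposite, nonzero, constant signs on the connected set $\complex\setminus\sigma_1$. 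A single evaluation at $\lambda\to-\infty$ using \lemref{asyneg} identifies the $+$ branch with the larger modulus and transfers the stated asymptotic formulas.

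The continuous boundary values on $\sigma_1$ come from the same formula. On an open subinterval $I\subset\sigma_1$ on which $D(\nu)<0$, $D$ is real, negative, and real-analytic, so as $\epsilon\downarrow 0$ the principal square root satisfies $\sqrt{D(\nu+i\epsilon)}\to +i\sqrt{|D(\nu)|}$ and $\sqrt{D(\nu-i\epsilon)}\to -i\sqrt{|D(\nu)|}$, the standard one-sided limits across the cut. Both boundary values of $\mu^\pm$ then exist and are real-analytic on $I$, and subtraction gives
\[ \mu^\pm(\nu+i0)-\mu^\pm(\nu-i0)=\pm 2i\sqrt{|D(\nu)|}=2i\,\Im\,\mu^\pm(\nu), \]
which is \eqref{3.g}. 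Analyticity of these extensions must fail only on the zero set of $D$ — the discrete set where ${\rm tr}(T_j)^2=4/(\delta_R\delta_B)$ — at which $\mu^+$ and $\mu^-$ coalesce into a double root.

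The main obstacle I expect is verifying that the principal-branch choice of $\sqrt D$ really does produce a $\mu^+$ with $|\mu^+|>1/\sqrt{\delta_R\delta_B}$ throughout the full domain $\complex\setminus\sigma_1$, and not only in a neighborhood of $\lambda=-\infty$. The connectedness argument above resolves this once one notes that $\sigma_1\subset[0,\infty)$ leaves the complement connected and that $|\mu^+|-1/\sqrt{\delta_R\delta_B}$ cannot change sign off $\sigma_1$; the asymptotic lemma then pins down the global labeling.
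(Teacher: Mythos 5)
Your reduction to a single square root is natural, but the branch you choose does not do what you claim, and the failure occurs at the one step that carries all the content. You assert that the two characterizations of $\sigma_1$ give ``$D(\lambda)\in(-\infty,0]$ if and only if $\lambda\in\sigma_1$.'' They do not: the paper's identity is $\sigma_1=\{\lambda\in\real \mid D(\lambda)\le 0\}$, and the implication you need in addition --- that $D(\lambda)\in(-\infty,0]$ forces $\lambda$ real --- is false. Writing $\omega=u+iv=\sqrt\lambda$, one has ${\rm tr}\,T_j(\lambda)=a+b\cos(2\omega)$ with $|a|<b$, so on the curves where $\cos(2u)\cosh(2v)=-a/b$ with $v\ne 0$ and $\sin(2u)\ne 0$ the trace is purely imaginary and nonzero; there $D={\rm tr}(T_j)^2/4-1/(\delta_R\delta_B)$ is real and negative while $\lambda=\omega^2$ is not real, hence $\lambda\notin\sigma_1$. (Concretely, for $\delta_R=\delta_B=2$ and $\omega\approx 0.80+i$ one finds $\lambda\approx -0.36+1.60i$ and $D\approx -4.4$.) Across such a curve the principal branch of $\sqrt{D(\lambda)}$ jumps sign, so your $\mu^\pm$ are not even continuous on $\complex\setminus\sigma_1$; worse, on one side of the curve your ``$\mu^+$'' is the eigenvalue of \emph{smaller} modulus (in the numerical example above, ${\rm tr}/2+\sqrt{D}\approx 0.06i$ while the large eigenvalue is $\approx -4.14i$). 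This also undermines the next step: the ``constant sign of $|\mu^\pm|-1/\sqrt{\delta_R\delta_B}$ on a connected set'' argument presupposes exactly the continuity that the principal-branch choice fails to deliver, so it cannot be invoked to repair the labeling.

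The conclusion you want is still true --- $D$ is nonvanishing on $\complex\setminus\sigma_1$ and a single-valued analytic branch of $\sqrt D$ does exist there --- but proving it requires either a monodromy argument (every loop in $\complex\setminus\sigma_1$ encircles an even number of zeros of $D$, counted with multiplicity) or the paper's route: first define $\mu^\pm$ on the simply connected domain $\complex\setminus[0,\infty)$, which contains no zeros of $D$, label the two branches by modulus using $\mu^+\mu^-=1/(\delta_R\delta_B)$ together with Lemma~\ref{cross}, and then observe that the extensions across $\real\setminus\sigma_1$ from above and from below must agree because the modulus labeling is the same on both sides. Your treatment of \eqref{3.g} has a second, smaller slip of the same kind: since $D$ is real on $\real$, $\Im D(\nu+i\epsilon)=\epsilon D'(\nu)+O(\epsilon^2)$, so whether $\sqrt{D(\nu+i\epsilon)}$ tends to $+i\sqrt{|D(\nu)|}$ or to $-i\sqrt{|D(\nu)|}$ depends on the sign of $D'(\nu)$, not only on $\epsilon>0$. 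The paper avoids committing to a sign by instead excluding the alternative ``$\mu^\pm(\nu+i\epsilon)$ continues to $\mu^\pm(\nu-i\epsilon)$'' via the open mapping theorem applied to $\mu^+$.
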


\begin{proof}  Since ${\rm tr}(T_j)$ and $\det (T_j)$ are entire functions of $\lambda  $, 
the eigenvalues $\mu ^{\pm }(\lambda )$ (and eigenvectors \eqref{evecs}) will be analytic in any simply 
connected domain with ${\rm tr}(T_j)^2 - 4/(\delta _R \delta _B) \not= 0$.  Use $U = \complex \setminus [0,\infty )$ as such a domain.

The eigenvalues satisfy $\mu ^+\mu ^- = 1/(\delta _R\delta _B) $, so $|\mu ^+|$ and $|\mu ^-|$ and are distinct in $\complex \setminus \sigma _1$. 
Since $|\mu ^+(\lambda  )| > 1/\sqrt{\delta _R\delta _B }$ and $|\mu ^-(\lambda  )| < 1/\sqrt{\delta _R\delta _B}$ 
in $U$, the extensions across $\real \setminus \sigma _1$ are single valued.

To obtain the continuous extension to the real axis, note that 
the set of points where the eigenvalues coalesce, or ${\rm tr}(T_j)^2 - 4/(\delta _R \delta _B) = 0$,
is the zero set of an entire function, which has isolated (real) zeroes $r_i$.
The analytic functions $\mu^{\pm}(\lambda )$, given by \eqref{quadform}, thus have an analytic continuation from either half
plane to the real axis with these points $r_i$ omitted.  At these points
$$\lim_{\lambda  \to r_i} \mu^{\pm}(\lambda  ) = \mbox{tr}\,(T_j)/2$$
independent of the branch of the square root. 

We have observed in \lemref{cross} that if $|\mu^{\pm}(\nu )| = 
1/\sqrt{\delta _R\delta _B }$, then
$\nu \in \real $.  If ${\rm tr}\,(T_j)^2= \det (T_j) = 4/(\delta _R\delta _B) $,
then both sides of (\ref{3.g}) are $0$.  Suppose instead that 
${\rm tr}\,(T_j)^2/4 - \det (T_j) <0$,
so that the eigenvalues $\mu^{\pm}(\nu )$ are a nonreal conjugate pair.
Since the eigenvalues are distinct, they extend analytically across the real axis.
There are two possibilities: either (i) (\ref{3.g}) holds, in which case the extension of
$\mu^{\pm}(\nu +i\epsilon )$ is $\mu^{\mp}(\nu - i\epsilon )$, 
or (ii) $\mu^{\pm}(\nu + i\epsilon )$ extends to $\mu^{\pm}(\nu - i\epsilon )$.
The second case will be excluded because $|\mu ^+| > 1/\sqrt{\delta _R\delta _B}$
in the complement of $\sigma _1 $.
If (ii) held, then $\mu ^+$ would be an analytic function of $\lambda  $ in a neighborhood of $\nu$ 
satisfying $|\mu^+(\nu)| = 1/\sqrt{\delta _R\delta _B}$, and  
$|\mu^+(\lambda  )| \ge 1/\sqrt{\delta _R\delta _B}$.  But this violates the open mapping theorem
\cite[p. 162]{GK}, so (i) must hold. The treatment of $\mu ^-$ is the same.
\end{proof}

Define additional solutions of (\ref{1.a}) on $(0,1)$ by
\[ V(x,\lambda )  = sc(1+ \frac{1}{\delta _B} )\cos(\omega x)   
+ [ \mu ^{-} - c^2 -  \frac{sc'}{\delta _B} ] \sin(\omega x)/\omega ,\]
\[ U(x,\lambda ) 
= -sc( 1 + \frac{1}{\delta _R} )\cos (\omega (1-x))  
- [ \mu ^{-} - c^2 -  \frac{sc'}{\delta _R}]\sin(\omega (1-x))/\omega .\]
$V(x,\lambda )$ (resp. $U(x,\lambda )$) has initial data at $x=0^+$ (resp. $x=1^-$) given by the eigenvector $E_0^-$ (resp. $E_1^-$).

\begin{thm}\label{UandV} \it If $\lambda  \in \complex \setminus \sigma _1$ then $V(x,\lambda )$ may be extended 
to a solution of (\ref{1.a}) on $\tree _0$ which satisfies the vertex conditions \eqref{jumps0}, and so \eqref{vcond1}, for $x > 0$, 
and is square integrable on $\tree _0$.  
The analogous statements hold for $U(x,\lambda) $ on $\tree _1$.

\end{thm}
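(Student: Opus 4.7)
The plan is to extend $V(x,\lambda)$ first along the single backbone ray $r_0,b_0,r_1,b_1,\dots$, then to all of $\tree _0$ by exploiting rotational symmetry at each branching vertex, and finally to verify square integrability by a geometric sum controlled by the bound $|\mu ^-(\lambda)|<1/\sqrt{\delta _R\delta _B}$ from \thmref{dom1}.

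First I would note that, by the definition of $V$, the initial data $(V(0^+,\lambda),V'(0^+,\lambda))$ coincides with the eigenvector $E_0^-(\lambda)$ of $T_0(\lambda)$ with eigenvalue $\mu ^-(\lambda)$. Since $T_0(\lambda)$ was built precisely to propagate initial data from $x=0^+$ to $x=2^+$ across two consecutive edges while enforcing continuity and the jump conditions \eqref{jumps0} at $x=1$, the data at $x=2^+$ is $\mu ^-(\lambda)\,E_0^-$. Iterating along the backbone gives
\[ V(x+2,\lambda)=\mu ^-(\lambda)\,V(x,\lambda), \qquad x\ge 0, \]
so that all jump conditions at the odd and even integer points along this ray are automatic.

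Next I extend $V$ to the full subtree $\tree _0$ by assigning identical values on all edges lying at a common distance from $e_0$; this is well defined because the outgoing edges at any branching vertex are interchangeable. The only nontrivial point is to check \eqref{vcond1} at each vertex with $x>0$. At $r_n$ ($n\ge 1$) there is one incoming edge and $\delta _R$ outgoing edges all carrying the same data, so \eqref{vcond1} reads $-V'((2n)^-)+\delta _R\,V'((2n)^+)=0$, which is exactly the first line of \eqref{jumps0}. The analogous count at $b_n$ recovers the second line. No condition is imposed at the root $r_0$, consistent with the theorem's qualification ``for $x>0$''. For square integrability I group edges by depth: at depth $2n$ there are $(\delta _B\delta _R)^n$ copies of the interval $[2n,2n+1]$ and at depth $2n+1$ there are $\delta _B^{\,n+1}\delta _R^{\,n}$ copies of $[2n+1,2n+2]$, and on each copy $V$ agrees with its backbone value in the local coordinate. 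Combined with the scaling relation above, this yields
\[ \int _{\tree _0}|V|^2 \;=\; \Big[\int _0^1 |V(x,\lambda)|^2\,dx + \delta _B\int _1^2 |V(x,\lambda)|^2\,dx\Big]\sum _{n=0}^\infty (\delta _B\delta _R)^n|\mu ^-(\lambda)|^{2n}, \]
which converges because $\delta _B\delta _R|\mu ^-(\lambda)|^2<1$ by \thmref{dom1}. The parallel statement for $U(x,\lambda)$ on $\tree _1$ follows by the symmetric construction with $(T_1,E_1^-)$ replacing $(T_0,E_0^-)$.

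The main bookkeeping obstacle will be keeping the sign and scaling conventions for outward-pointing derivatives straight, so that the symmetric assignment on outgoing edges really does produce the jump conditions built into $T_0$; once that is pinned down, the argument reduces to one-dimensional propagation and a single geometric series.
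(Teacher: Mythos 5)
Your proposal is correct and follows essentially the same route as the paper: propagate the initial data $E_0^-$ along each ray via $T_0^n E_0^- = (\mu^-)^n E_0^-$, extend radially to $\tree_0$, and establish square integrability by counting edges at each depth and summing the geometric series controlled by $\delta_B\delta_R|\mu^-|^2 < 1$. Your version is somewhat more careful than the paper's (the explicit outward-derivative check of \eqref{vcond1} at branching vertices and the separate counts $(\delta_B\delta_R)^n$ versus $\delta_B^{\,n+1}\delta_R^{\,n}$ for the two halves of each edge pair), but the underlying argument is the same.
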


\begin{proof} If $\lambda  \in \complex \setminus \sigma _1$ then    
by \thmref{dom1} the eigenvalue $\mu ^-(\lambda  )$ is well defined.
Extend $V(x,\lambda )$ as a solution to (\ref{1.a}) on $x > 1$ using the conditions \eqref{jumps0}. 
For integers $n > 0$, the initial data at $x=2n^+$ is given by 
\[\begin{pmatrix} V(2n^+,\lambda ) \cr V'(2n^+,\lambda ) \end{pmatrix} = 
T_0^n(\lambda ) \begin{pmatrix} V(0^+,\lambda ) \cr V'(0^+,\lambda ) \end{pmatrix}
= (\mu ^-)^n \begin{pmatrix} V(0^+,\lambda ) \cr V'(0^+,\lambda ) \end{pmatrix} .\]
Since $|\mu ^-(\lambda  )| < 1/\sqrt{\delta _R\delta _B}$ the computation
\[\int_{\cal T _R} | y |^2 = \sum_{k=0}^{\infty} (\delta _B \delta _R)^k \int_{0 \le x \le 2} |(\mu ^-)^k y |^2
= \sum_{k=0}^{\infty} \big ( \delta _B \delta _R (\mu ^-)^2 \big )^k \int_{0 \le x \le 2} |y |^2,\]
establishes the square integrability.
\end{proof}

\subsection{The resolvent and spectrum for trees}  

The resolvent $(\dop - \lambda I)^{-1}$ is defined in the resolvent set $\rho $, the complement of the spectrum of $\dop $. 
The solutions $U$ and $V$ of \thmref{UandV} can be used to construct the
resolvent of ${\cal L}$ on ${\cal T}$. The Wronskian $W(\lambda ) = W(U,V) = U(x,\lambda )V'(x,\lambda )-U'(x,\lambda )V(x,\lambda )$,
defined for $x \in (0,1)$, is independent of $x$.   Evaluation at $x = 1^-$ gives
\begin{equation} \label{Wronski}
W(\lambda  ) = 
\Bigl [ sc(1 + \frac{1}{\delta _B})c' + ( \mu ^{-} - c^2 -  \frac{sc'}{\delta _B} )c \Bigr ] sc(1 + \frac{1}{\delta _R })
\end{equation}
\[+ s\Bigl [(1 +  \frac{1}{\delta _B} )c^2  + ( \mu ^{-} - c^2 -  \frac{sc'}{\delta _B} ) \Bigr ] \Bigl [( \mu ^{-} - c^2 -    \frac{sc'}{\delta _R} ) \Bigr ] \]
The identity $ sc' - c^2 = sc' - cs' = -1$ leads to a simplification,
\[W(\lambda ) =  s \Bigl [ c^2(1 + \frac{1}{\delta _B}) +  ( \mu ^{-} - c^2 -  \frac{sc'}{\delta _B} ) \Bigr ]
\Bigl [  c^2(1 + \frac{1}{\delta _R}) +  ( \mu ^{-} - c^2 -  \frac{sc'}{\delta _R} ) \Bigr ] \]
\[-  sc^2(1 +  \frac{1}{\delta _B} ) ( 1 + \frac{1}{\delta _R} ) \]
\[ =  s \Bigl [ \frac{1}{\delta _B}  +   \mu ^{-}  \Bigr ]
\Bigl [   \frac{1}{\delta _R} +   \mu ^{-}  \Bigr ] 
-  sc^2(1 +  \frac{1}{\delta _B} ) ( 1 + \frac{1}{\delta _R} ).\]

Define
\[\sigma _2 = \{ \lambda \in \complex \setminus \sigma _1 \ | \ W(\lambda ) = 0 \} .\]

\begin{lem} \label{Wron}  
$\sigma _2$ is a discrete subset of $\real \setminus \sigma _1$.
If $\delta _R\delta _B > 1$ and $s(\lambda ) = 0$ then $\lambda \in \sigma _2$.
If $\lambda \in \sigma _2$ and  
$U$ and $V$ are not identically zero, which is the case if  $s(\lambda ) = 0$,
then $\lambda $ is a eigenvalue of $\dop $.   
\end{lem}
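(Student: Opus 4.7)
The plan is to establish the three claims in sequence, with the eigenfunction construction of the third feeding back to complete the reality part of the first.

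For discreteness: by \thmref{dom1}, $\mu^-(\lambda)$ is single-valued analytic on $\complex \setminus \sigma_1$; since $s, c, c'$ are entire, so is $W$ on this domain. Discreteness of its zero set follows once $W$ is shown not to be identically zero, which I would verify by letting $\lambda \to -\infty$: \lemref{asyneg} gives $\mu^-(\lambda) = O(e^{-2|{\rm Im}\,\omega|})$, so the piece $s[1/\delta_B + \mu^-][1/\delta_R + \mu^-]$ of $W$ is $O(e^{|{\rm Im}\,\omega|})$, while $-sc^2(1+1/\delta_B)(1+1/\delta_R)$ grows like $e^{3|{\rm Im}\,\omega|}$. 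Hence $|W(\lambda)| \to \infty$ and $\sigma_2$ is discrete in $\complex \setminus \sigma_1$. The refinement $\sigma_2 \subset \real$ will follow from the eigenfunction construction whenever $U, V$ are nontrivial; in the residual case where $U$ or $V$ vanishes identically on $e_0$, the common factor $sc$ in their leading coefficients must vanish, so $\omega$ is a real multiple of $\pi/2$ and $\lambda$ is real.

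For the implication $s(\lambda) = 0 \Rightarrow \lambda \in \sigma_2$ under $\delta_R\delta_B > 1$: substituting $\sin\omega = 0$, so $\omega = n\pi$, $c = \pm 1$, $c' = 0$, into the formulas for ${\rm tr}(T_j)$ and $\det(T_j)$ yields discriminant $(1-q)^2/4$ with $q = 1/(\delta_R\delta_B)$, which is strictly positive by hypothesis. This places $\lambda$ outside $\sigma_1$ with $\mu^- = q$. The simplified form of $W$ vanishes at such $\lambda$ because every surviving term carries the factor $s$. To confirm $U, V$ are nontrivial, reduce their defining formulas at $s = c' = 0$, $c^2 = 1$: for example $V(x,\lambda) = (\mu^- - 1)\sin(n\pi x)/(n\pi)$, which is nonzero because $q \ne 1$, and symmetrically for $U$.

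Finally, for the eigenfunction construction, suppose $\lambda \in \sigma_2$ with $U, V$ both nontrivial. The vanishing Wronskian on $e_0$ gives $U = cV$ there for some $c \ne 0$. I define $y := V$ on $\tree_0$ and $y := c^{-1} U$ on $\tree_1$; the two definitions agree on the common edge $e_0 = \tree_0 \cap \tree_1$, and on each edge $y$ satisfies $-D^2 y = \lambda y$. By \thmref{UandV}, $y$ satisfies \eqref{vcond1} at every vertex of $\tree_0$ other than the root $r_0$---and in particular at $b_0$, whose full $(\delta_B+1)$ $\tree$-edges lie in $\tree_0$---and at every vertex of $\tree_1$ other than $b_0$, including $r_0$ with all $\delta_R+1$ of its $\tree$-edges in $\tree_1$. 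These two collections exhaust the vertices of $\tree$, so \eqref{vcond1} holds everywhere; square-integrability of $y$ is inherited from \thmref{UandV}. Hence $\lambda$ is an eigenvalue of the self-adjoint $\dop$, so $\lambda$ is real. I expect the main obstacle to be exactly this vertex bookkeeping in the gluing: one must observe that $b_0$ is interior to $\tree_0$ (its full star there) while $r_0$ is interior to $\tree_1$, so that the two partial extensions dovetail to satisfy \eqref{vcond1} on all of $\tree$.
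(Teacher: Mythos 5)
Your proof is correct and follows essentially the same route as the paper's: the gluing of $V$ on $\tree_0$ with a multiple of $U$ on $\tree_1$ to produce an eigenfunction (hence $\lambda\in\real$ by self-adjointness), and direct substitution of $s=0$, $c^2=1$, $c'=0$ into the discriminant and into the initial data $E_0^-,E_1^-$ (where your discriminant $(1-1/(\delta_R\delta_B))^2/4$ is in fact the correct value). The only variation is that you establish $W\not\equiv 0$ by the $\lambda\to-\infty$ asymptotics from \lemref{asyneg}, whereas the paper gets $W\neq 0$ off the real axis as a byproduct of the eigenfunction argument; both are sound.
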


\begin{proof}
Having initial data $E_0^-(\lambda )$ and $E_1^-(\lambda )$, the solutions $V(x,\lambda )$ and $U(x,\lambda )$ are not identically $0$ 
except for a discrete set of $\lambda \in \real $. 
The vanishing of $W(\lambda )$ means that $V$ and $U$ are linearly dependent on $(0,1)$.  If $U(x,\lambda ) = bV(x,\lambda )$ 
for some constant $b$ and $\lambda \in \complex \setminus \sigma _1$, the function 
\[w(x,\lambda ) =  \Bigl \{ \begin{matrix} bV(x,\lambda ), x \in \tree _0 \cr U(x,\lambda ) , x \in \tree _1 \end{matrix} \Bigr \} \]
is an nontrivial eigenfunction of $\dop $.  Since $\dop $ is self-adjoint, $\lambda \in \real $ and $W(\lambda ) \not= 0$ for $\lambda \in \complex \setminus \real$.  The discreteness of $\sigma _2$ follows since $W(\lambda )$ is analytic in $\complex \setminus \sigma _1$. 

If $s(\lambda ) = 0$ then $W(\lambda ) = 0$ and $c^2(\lambda ) = 1$.
Thus $tr(T_j(\lambda ))^2/4 - \det T_j(\lambda ) = (1 - 1/(\delta _B\delta _R))/4$, which is strictly positive if $\delta _R\delta _B > 1$, and $\lambda \notin \sigma _1$ by \eqref{quadform}. Since $|\mu ^-| < 1/\sqrt{\delta _B\delta _R}$, the initial data for $V(x,\lambda )$ and $U(x,\lambda )$ is nonzero by \eqref{evecs}.

\end{proof}

For $\lambda  \in \complex \setminus (\sigma _1 \cup \sigma _2)$ define the kernel
\begin{equation} \label{resform}
R_e(x,t,\lambda  ) = 
\left\{\begin{array}{ll}
-U(x,\lambda  )V(t,\lambda  )/W, \quad  -\infty < x \le t, 0 \le t \le 1\,, \\ 
-U(t, \lambda  )V(x, \lambda  )/W, \quad 0 \le t \le 1, t \le x < \infty \,.  
\end{array}\right\}. 
\end{equation}
Suppose $f_e:[0,1] \to \complex$ has support in $(0,1)$.  A simple computation \cite [p. 309]{BR} shows that the function 
\[h_e(x) = \int_0^1 R_e(x,t,\lambda  )f_e(t) \, dt \]
satisfies $[-D^2-\lambda ]h_e = f_e$.  By \thmref{UandV} $h_e$ is square integrable on ${\cal T}$ and satisfies the boundary conditions \eqref{vcond1}
at each vertex of $\tree $.  Thus $h_e$ is in the domain of ${\cal L}$.
Since integration of $f_e$ against the meromorphic kernel $R_e(x,t,\lambda  )$ 
agrees with the resolvent $R(\lambda  )f_e$ as long as $W(\lambda  ) \not= 0$, they must agree for all $\lambda  \in \rho $.

For $f \in L^2({\cal T})$, let $f_e$ denote the restriction of $f$ to the edge $e$ with the edge orientation described for \eqref{resform}.  

\begin{thm}\label{Thm4.1} For $\lambda  \in  \complex \setminus (\sigma _1 \cup \sigma _2)$,
\begin{equation} \label{resform2}
R(\lambda  )f = \sum_e \int_0^1 R_e(x,t,\lambda  )f_e(t) \, dt ,
\end{equation} 
the sum converging in $L^2({\cal T})$. \end{thm}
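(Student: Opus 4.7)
The plan is to extend the single-edge, compactly-supported identity verified in the paragraph just above the statement to all of $L^2(\tree )$. The three ingredients are: orthogonal decomposition $L^2(\tree ) = \bigoplus _e L^2(e)$; density of $C_c(0,1)$ in $L^2[0,1]$; and an $L^2$-boundedness estimate for the integral operator with kernel $R_e(x,t,\lambda )$ that justifies taking limits and summing.

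First I would assume $\Im \lambda \ne 0$, so $\lambda \in \rho $ and $R(\lambda )$ is bounded. Write $f = \sum _e f_e$ with $f_e$ the restriction of $f$ to $e$ extended by zero to $\tree $. Continuity of $R(\lambda )$ gives $R(\lambda )f = \sum _e R(\lambda )f_e$ in $L^2(\tree )$, so the theorem reduces to proving, for each edge,
\[R(\lambda )f_e = \int _0^1 R_e(\cdot ,t,\lambda )f_e(t)\, dt, \qquad f_e \in L^2[0,1].\]
This is already known for $f_e \in C_c(0,1)$, so for general $f_e$ I would choose $g_n \in C_c(0,1)$ with $g_n \to f_e$ in $L^2$, apply the known identity to each $g_n$, and take limits. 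The left side converges by boundedness of $R(\lambda )$. For the right side I would show that the operator $T_e(\lambda ):L^2[0,1] \to L^2(\tree )$, $g \mapsto \int _0^1 R_e(\cdot ,t,\lambda ) g(t)\, dt$, is bounded by Cauchy--Schwarz and Fubini:
\[\|T_e(\lambda )g\|^2_{L^2(\tree )} \le \|g\|^2_{L^2[0,1]} \int _0^1 \int _\tree |R_e(x,t,\lambda )|^2 \, dx\, dt.\]
For each fixed $t \in [0,1]$, by the definition \eqref{resform} the function $x \mapsto R_e(x,t,\lambda )$ is a bounded multiple of $V(x,\lambda )$ on $\tree _0$ and of $U(x,\lambda )$ on $\tree _1$; since $V \in L^2(\tree _0)$ and $U \in L^2(\tree _1)$ by \thmref{UandV}, the iterated integral is finite. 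This yields $T_e(\lambda )g_n \to T_e(\lambda )f_e$ in $L^2(\tree )$, hence the per-edge identity for all $f_e \in L^2[0,1]$, and summing over edges gives the stated formula with $L^2$-convergence.

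To cover real $\lambda \in \real \setminus (\sigma _1 \cup \sigma _2)$, I would define $T(\lambda )f := \sum _e T_e(\lambda )f_e$ directly, observe that the same Cauchy--Schwarz/Fubini estimate gives boundedness on $L^2(\tree )$ (since $W(\lambda ) \ne 0$ off $\sigma _2$ and $U,V$ are square integrable off $\sigma _1$ by \thmref{UandV}), and verify $(\dop - \lambda I)T(\lambda ) = I$ on $L^2(\tree )$ using the single-edge computation quoted before the theorem; this forces $\lambda \in \rho $ and $T(\lambda ) = R(\lambda )$. The main obstacle is the uniform-in-$t$ bound on $\|R_e(\cdot ,t,\lambda )\|_{L^2(\tree )}$; everything else is orthogonal-decomposition bookkeeping, and \thmref{UandV} is precisely the input that makes that estimate go through.
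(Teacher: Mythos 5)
Your argument for $\Im\lambda\neq 0$ is sound and is essentially the convergence bookkeeping the paper leaves implicit: each $T_e$ is Hilbert--Schmidt because $\int_{\tree}|R_e(x,t,\lambda)|^2\,dx<\infty$ (this needs $\delta_R\delta_B|\mu^-|^2<1$, i.e.\ $|\mu^-|<1/\sqrt{\delta_R\delta_B}$, which holds off $\sigma_1$), and the identification with $R(\lambda)f_e$ on each edge plus boundedness of $R(\lambda)$ handles the sum. The problem is your last paragraph, which is where the real content of the theorem lives: for $\lambda\in\real\setminus(\sigma_1\cup\sigma_2)$ you must first show $\lambda\in\rho$, and your route to that is the claim that $T(\lambda)=\sum_e T_e$ is bounded on $L^2(\tree)$ ``by the same Cauchy--Schwarz/Fubini estimate.'' That estimate controls each $T_e$ separately, but the ranges of the $T_e$ are not orthogonal (each $T_e g$ lives on all of $\tree$), so boundedness of the sum does not follow. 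Concretely, your displayed bound applied to the full kernel gives $\|T(\lambda)\|^2\le\sum_e\|T_e\|_{HS}^2$, which diverges since the tree has infinitely many edges all contributing the same Hilbert--Schmidt norm; and a Schur test on the full kernel requires row sums of the form $\sum_k(\delta_R\delta_B)^k|\mu^-|^k$ to converge, i.e.\ $\delta_R\delta_B|\mu^-(\lambda)|<1$. Since the paper later shows $\mu^-(\lambda)\ge 1/(\delta_R\delta_B)$ for real $\lambda\ge 0$, this fails on exactly the part of the real axis you need, so no off-the-shelf kernel estimate closes the gap. Without boundedness of $T(\lambda)$, the identity $(\dop-\lambda)T(\lambda)=I$ on a dense set only shows that $\mathrm{ran}(\dop-\lambda)$ is dense, which rules out eigenvalues but not continuous spectrum.

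The paper sidesteps this entirely with a different mechanism. It takes a compact interval $[a,b]\subset\real\setminus(\sigma_1\cup\sigma_2)$ and applies Stone's formula,
\begin{equation*}
\tfrac{1}{2}[P_{[a,b]}+P_{(a,b)}]f=\lim_{\epsilon\downarrow 0}\frac{1}{2\pi i}\int_a^b[R(\lambda+i\epsilon)-R(\lambda-i\epsilon)]f\,d\lambda ,
\end{equation*}
evaluated on the dense set of $f$ supported on finitely many edges, where the kernel formula (already justified for nonreal $\lambda$) applies. Because $U$, $V$, $\mu^-$ and $W$ are analytic across $[a,b]$ and $W\neq 0$ there, the boundary values from above and below coincide, the integral vanishes, hence $P_{[a,b]}=0$ and $[a,b]\subset\rho$; the formula \eqref{resform2} for such $\lambda$ then follows by analytic continuation from the nonreal case. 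If you want to keep your structure, you should replace the boundedness claim for real $\lambda$ with an argument of this type (or some other genuine proof that the spectral gap is a resolvent gap); as written, that step is the missing heart of the theorem.
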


\begin{proof} The formula \eqref{resform2} agrees with the resolvent of $\dop $ in the resolvent set, which includes $\complex \setminus \real $.
Let $[a,b]$ be a compact interval contained in 
$\real \setminus \{ \sigma _1 \cup \sigma _2 \} $.
If $P$ denotes the family of spectral projections for ${\cal L}$, then 
\cite[p. 237,264]{RS2} for any $f \in L^2({\cal T})$
\begin{equation}
\frac{1}{ 2}[P_{[a,b]} + P_{(a,b)}]f = 
\lim_{\epsilon \downarrow 0} \frac{1}{ 2 \pi i}\int_a^b [R(\lambda  + i\epsilon ) 
- R(\lambda  - i\epsilon )]f \ d\lambda \,. \label{4.c}
\end{equation} 
By the observations above, the right hand side of (\ref{4.c}) vanishes on the dense set
of $f$ supported on finitely many edges.  This means that
$[P_{[a,b]} + P_{(a,b)}]f =0$ for all $f \in L^2({\cal T})$,
and $[a,b]$ is in the resolvent set $\rho $. 
Thus $\complex \setminus \{ \sigma _1 \cup \sigma _2 \} \subset \rho $.

\end{proof}

\section{Closed walks and resolvent traces}

In this section the analysis of the resolvent of the biregular
tree $R_{\cal T}(\lambda  )$ is extended to  the resolvent $R_{\cal G}(\lambda  )$ of a general biregular graph ${\cal G}$.  
$\tree $ and $\graph $ are linked via the theory of covering spaces; see \cite{Massey} and \cite{Hatcher} for this material are sources for the theory and its application to graphs.
A feature of the approach using covering spaces is that generating functions for closed nonbacktracking walks in $\graph $
arise naturally.

\subsection{Walks} 

A combinatorial walk of length $l$ in $\graph $ (or $\tree$) starting at the vertex $v_1$ and ending at $v_{l+1}$
is a sequence $v_1,e_1,v_2,e_2,\dots ,e_{l},v_{l+1}$ with 
adjacent vertices $v_i,v_{i+1}$ joined by edges $e_i$.  A walk is closed if $v_1 = v_{l+1}$, and nonbacktracking if consecutive undirected edges are distinct.
A closed walk may be nonbacktracking if the edge $\{ v_1,v_2 \}$ is the same as  $\{v_1,  v_{l+1}\}$; in the terminology of \cite{Terras} tails are allowed.

If $\eta _l$ denotes the number of nonbacktracking closed walks of length $l \ge 1$ in $\graph$, then  $p_{\graph}(z) = N_{\eset} + \sum_{l=1}^{\infty} \eta _lz^l$ 
will be the associated     
generating function.  When $\graph $ is finite with  adjacency matrix $A$, an elementary result in graph spectral theory 
gives the number ${\widetilde \eta _l}$ of closed walks of length $l$ (with backtracking allowed) as the trace of the $l-th$ power of $A$.
The comparison $\eta _l \le {\widetilde \eta _l}$ can be used to show that $p_{\graph}(z)$ converges to an analytic function in a neighborhood of $z = 0$.

Using an identification of the edges with intervals of unit length, there is an edgepath $\gamma :[0,l] \to \graph $
which traverses the edges $e_i$ of a walk in order at unit speed.
Any loop in $\graph$ with basepoint $v_1$ is homotopic  \cite[p. 86]{Hatcher} to such an edgepath. 
By adding vertices of degree $2$ (without changing the lengths of the original edges), 
closed edgepaths may be assumed to start and end at edge midpoints.

It will be useful to determine the walk generating function $p_{\graph}(z)$ for the complete bipartite graphs $K(m_B,m_R)$ with $m_B$ vertices in $\vset _B$ and
$m_R$ vertices in $\vset _R$.
Closed walks in bipartite graphs have even length, so assume $l = 2k$ is even.  Start by counting the number of
nonbacktracking walks of length $2k-1$ starting at $v \in \vset _R$ and ending in $\vset _B$.  The vertex $v$ may be followed by any of $m_B$ vertices in $\vset _B$.
To avoid backtracking, there are then $m_R -1$ available vertices in $\vset _R$, then $m_B-1$ available vertices in $\vset _B$, and so on.  
After $2k-1$ steps the number of such walks is $m_B(m_R-1)^{k-1}(m_B-1)^{k-1} $.  The last step must return to $v$, but to avoid backtracking 
the walks which returned to $v$ after $2k-2$ steps are omitted.  The count $C_k$ of closed nonbacktracking walks of length $2k$ starting at $v$
satisfies $C_k = m_B(m_R-1)^{k-1}(m_B-1)^{k-1} - C_{k-1}$.
Multiplying by the number of starting $R$ vertices and doing the same for starts in $\vset _B$ shows that 
the total number $W_k$ of  closed nonbacktracking walks of length $2k$ with designated starting vertex satisfies 
\[W_k = 2m_Rm_B(m_R-1)^{k-1}(m_B-1)^{k-1} - W_{k-1}, \quad k \ge 2, \quad W_1 = 0.\]

That is, for $k \ge 2$,
\[W_k = 2m_Rm_B(-1)^{1-k} \sum_{j = 1}^{k-1} (-1)^{j} (m_R-1)^j(m_B-1)^j \]
\[= \frac{2m_Rm_B(m_R-1)(m_B-1)} {1 + (m_R-1)(m_B-1)} [(m_R-1)^{k-1}(m_B-1)^{k-1} + (-1)^{k}] \]
Adding the number of edges for order zero term gives the generating function
\begin{equation} \label{BCG}
P_{CG}(z) =  \frac{m_Rm_B}{2} + \frac{2m_Rm_B(m_R-1)(m_B-1)} {1 + (m_R-1)(m_B-1)} \times 
\end{equation}
\[\times \Big[ \sum_{k=2}^{\infty} (m_R-1)^{k-1}(m_B-1)^{k-1} z^{k} + \sum_{k=2}^{\infty} (-1)^kz^{k} \Big] \]
\[ =  \frac{m_Rm_B}{2} +  \frac{2m_Rm_B(m_R-1)(m_B-1)} {1 + (m_R-1)(m_B-1)}\Big[ \frac{(m_R-1)(m_B-1)z^2}{1 - (m_R-1)(m_B-1)z}  + \frac{z^2}{1 + z} \Big ]. \]
$W_k$ is a count of walks of length $2k$, so the closed nonbacktracking walk generating function is 
\begin{equation} \label{bCG}
p_{CG}(z) = P_{CG}(z^2) =
\end{equation}
\[\frac{m_Rm_B}{2} +  \frac{2m_Rm_B(m_R-1)(m_B-1)} {1 + (m_R-1)(m_B-1)}\Big[ \frac{(m_R-1)(m_B-1)z^4}{1 - (m_R-1)(m_B-1)z^2}  + \frac{z^4}{1 + z^2} \Big ]. \]

Suppose $\{r,b \}$ is an edge of $K(m_B,m_R)$, or any bipartite graph, with midpoint $w$.  
The closed nonbacktracking walks have one walk starting at $b$ and followed by $r$, and one
starting at $r$ and followed by $b$.  Without changing the count, these walks can be viewed as starting at $w$ followed by $b$ or $r$.  This alternate view 
will be useful later.

\subsection{Coverings}

Each biregular graph $\graph $ has a universal covering space $({\tree},p)$, 
where as before $\tree $ is the biregular tree.   The continuous map $p:\tree \to \graph $ 
is such that for each $x \in \graph $ there is an open neighborhood $N$ 
containing $x$ such that $p^{-1}(N)$ is a union of pairwise disjoint sets, each homeomorphic to $N$.
The $R,B$ labeling of tree vertices can be chosen so that $p$ preserves vertex class.

Pick basepoints $\xi _0 \in \graph $ and $\widetilde \xi _0 \in p^{-1}(\xi _0)$.  If $\widetilde \gamma $ is a nonbacktracking edgepath
in $\tree $ starting at  $\widetilde \xi _0$ and ending at $\widetilde \xi _1 \in p^{-1}(\xi _0)$, then 
$ \gamma = p(\widetilde\gamma )$ will be a closed nonbacktracking edgepath in $\graph $.  
Every closed nonbacktracking edgepath arises in this way \cite[p. 86]{Hatcher}.

Suppose that $\xi _0$ is a point in the interior of the edge $e_0 \in {\cal G}$,
and that $\widetilde{\xi}_0 \in p^{-1}(\xi _0)$.  Let $\widetilde e_0$ be the edge of ${\cal T}$
containing $\widetilde{\xi}_0$.  Then given any function $f \in L^2(e_0)$, there is a 
corresponding function $\widetilde f \in L^2(\tree )$ such that
\[ \widetilde f(\widetilde{\xi} ) = \left\{ \begin{array}{ll}
 f(p(\widetilde{\xi} )), & \widetilde{\xi} \in \widetilde e_0\,, \\ 
0, & \widetilde{\xi} \notin \widetilde e_0 \end{array}\right\}. \]

\begin{prop}\label{Thm5.1}  Suppose that $f \in L^2({\cal G})$ has support on an edge $e_0$.
There is a $C > 0$ such that if
$|Im(\sqrt \lambda  )| > C$ then for $\xi \in {\cal G}$ 
$$[R_{\cal G}(\lambda  )f](\xi ) 
= \sum_{\widetilde{\xi} \in p^{-1}(\xi )} [R_{\cal T}(\lambda  )\widetilde f](\widetilde \xi ).$$  
The sum and its first two derivatives converge uniformly for $\xi \in {\cal G}$. 
\end{prop}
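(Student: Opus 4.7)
The plan is to lift to the universal cover and periodize. Let $\Gamma$ denote the group of deck transformations of $p:\tree \to \graph$; $\Gamma$ acts freely by isometries on $\tree$, and the preimages of any $\xi \in \graph$ form a single $\Gamma$-orbit. The asserted identity then says that $R_{\graph}(\lambda)f$ is the $\Gamma$-periodization of $R_{\tree}(\lambda)\widetilde f$.

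The core step is a pointwise decay bound on $R_{\tree}(\lambda)\widetilde f$. From \eqref{resform} together with the propagation rule $V(x+2,\lambda) = \mu^-(\lambda) V(x,\lambda)$ along every ray in $\tree_0$ and the analogous statement for $U$ on $\tree_1$, I would establish
\[
|[R_{\tree}(\lambda)\widetilde f](\widetilde \xi)|\ \le\ A(\lambda)\,\|f\|_{L^2}\,|\mu^-(\lambda)|^{d(\widetilde \xi,\widetilde e_0)/2},
\]
with matching bounds for the first two derivatives (the second using $-D^2 y = \lambda y$ away from $\widetilde e_0$). The biregular tree carries at most $O((\delta_R\delta_B)^{d/2})$ vertices in any ball of radius $d$, so each fiber $p^{-1}(\xi)$ has at most that many elements at distance $\le d$ from $\widetilde e_0$. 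By \lemref{asyneg}, $|\mu^-(\lambda)| \to 0$ exponentially as $|{\rm Im}\sqrt \lambda| \to \infty$, so $C$ can be chosen with $\delta_R\delta_B\,|\mu^-(\lambda)| < 1$ whenever $|{\rm Im}\sqrt \lambda| > C$. Then the series $\sum_{\widetilde \xi \in p^{-1}(\xi)} [R_{\tree}(\lambda)\widetilde f](\widetilde \xi)$ is dominated by a geometric series of ratio $\sqrt{\delta_R\delta_B\,|\mu^-(\lambda)|}$, yielding absolute, uniform convergence of the sum and its first two derivatives in $\xi \in \graph$.

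Call the limit $u$. Because the summation runs over the full $\Gamma$-orbit, the associated function $\sum_\gamma [R_{\tree}(\lambda)\widetilde f] \circ \gamma$ on $\tree$ is $\Gamma$-invariant, so $u$ is unambiguously defined on $\graph$. Uniform convergence permits termwise differentiation, so on each edge
\[
(-D^2 - \lambda) u(\xi)\ =\ \sum_{\widetilde \xi \in p^{-1}(\xi)} \widetilde f(\widetilde \xi)\ =\ f(\xi),
\]
since only the single preimage lying in $\widetilde e_0$ contributes. The vertex condition \eqref{vcond1} at any $v \in \graph$ passes to $u$ because each translate $[R_{\tree}(\lambda)\widetilde f] \circ \gamma$ satisfies \eqref{vcond1} at every preimage of $v$, and outward derivatives at $v$ correspond under $p$ to outward derivatives at the preimages. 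Since $\graph$ is finite and $u$ is bounded, $u \in L^2(\graph)$ and lies in the domain of $\dop$. For $|{\rm Im}\sqrt \lambda| > C$ we also have $\lambda$ in the resolvent set of $\dop$ on $\graph$, so uniqueness forces $u = R_{\graph}(\lambda)f$.

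The main obstacle lies in the second paragraph: the fiber grows at the tree rate $\sqrt{\delta_R\delta_B}$ per unit distance, while the kernel decays only like $|\mu^-(\lambda)|^{1/2}$ per unit distance. The bound $|\mu^-(\lambda)| < 1/\sqrt{\delta_R\delta_B}$ from \thmref{dom1} is too weak; genuine pointwise convergence of the periodization requires $\delta_R\delta_B\,|\mu^-(\lambda)| < 1$, which is exactly what the restriction $|{\rm Im}\sqrt \lambda| > C$ supplies through \lemref{asyneg}.
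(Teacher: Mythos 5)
Your convergence argument is essentially the paper's: both proofs reduce uniform convergence of the fiber sum and its first two derivatives to the single condition $\delta_R\delta_B\,|\mu^-(\lambda)|<1$, obtained for $|{\rm Im}\sqrt\lambda|>C$ via \lemref{asyneg}, and both then verify $(-D^2-\lambda)u=f$ and the vertex conditions termwise and invoke uniqueness of the resolvent. Your closing paragraph correctly identifies the real issue, namely that the bound $|\mu^-|<1/\sqrt{\delta_R\delta_B}$ from \thmref{dom1} is insufficient against the fiber growth.

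There is, however, one genuine gap: you write ``Since $\graph$ is finite and $u$ is bounded, $u\in L^2(\graph)$,'' but the proposition is stated for a \emph{general} biregular graph $\graph$, which need not be finite (the section explicitly extends the tree analysis to ``a general biregular graph''). For infinite $\graph$, pointwise boundedness of $u$ does not give square integrability, and this is not a formality: the paper devotes the entire second half of its proof to estimating $\sum_{e_a}|S_{e_a}|^2$, where $S_{e_a}$ collects the contributions $(\mu^-)^{k(m)}$ over the fiber above each edge $e_a$, and the resulting condition is $(\delta_R\delta_B)^3|\mu^-|^2<1$ --- strictly stronger than $\delta_R\delta_B|\mu^-|<1$ --- which is why the constant $C$ has to be enlarged a second time (``After modifying $C$\dots''). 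Without some such summation over the edges of $\graph$ of the squared fiber sums, you have not shown that $u$ lies in $L^2(\graph)$, hence not that it lies in the domain of $\dop$, and the uniqueness step $u=R_{\graph}(\lambda)f$ cannot be invoked. Restricted to finite $\graph$ your proof is complete and matches the paper's first half.
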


\begin{proof} The proof has two parts: a formal verification and a proof that the sum converges.
Consider the two sums
$$H(\xi ,\lambda  ) = \sum_{\widetilde{\xi} \in p^{-1}(\xi )} [R_{\cal T}(\lambda  )\widetilde f](\widetilde \xi ), \quad 
h(\xi ,\lambda  ) = \sum_{\widetilde{\xi} \notin \widetilde e_0} [R_{\cal T}(\lambda  )\widetilde f](\widetilde \xi ).$$
Note that 
$$(- D^2 - \lambda  ) H(\xi ,\lambda  )
= \left\{ \begin{array}{ll}
 f(\xi ) , & \xi \in  e_0, \\
0, &\xi \notin e_0. \end{array}\right.
$$  
Since the vertex conditions are satisfied in the tree, they are still 
satisfied when we sum over vertices in the tree.

To check on the convergence of $H(\xi ,\lambda  )$ it suffices to check $h(\xi ,\lambda  )$.
Since the local homeomorphism of edges extends to edge pairs $\{r_n,b_n\}, \{b_n,r_{n+1}\}$,  
uniform convergence of the series for the $k-th$ derivative, $k = 0,1,2$ of $h(\xi ,\lambda  )$
is implied by  
$$\sum_{n=0}^\infty ( \delta _B \delta _R |\mu ^-|)^n  < \infty , $$
or $ \delta _B \delta _R|\mu ^-(\lambda  )| < 1 $.
For $C$ sufficiently large, this holds for  $|\,{\rm Im}\,(\sqrt \lambda  )| > C$ 
by \lemref{asyneg}.

If ${\cal G}$ is not a finite graph we must still check that $h \in L^2(\graph )$.
Let $e_a \in \graph$ denote an edge pair as above.  It will suffice
to consider the summands of $h$ with $\widetilde \xi  \in \tree _0$.
This contribution to the $L^2(\graph )$ norm is dominated by a sum
\[ \sum_{e_a \in {\cal G}} \int_0^2 |V(x,\lambda  )|^2 | \sum_{\widetilde{e_a} \in p^{-1}(e_a)} (\mu ^-)^{k(m)/2} |^2, \]
where the sum is taken over distinct edge pairs $e_a$, and
$k(m)$ (which may be taken even) is the distance from the tree root $r_0$ to $\widetilde{e_a}$.

This sum converges with
$$\sum_{e_a}|S_{e_a}|^2, 
\quad S_{e_a} = \sum_{\widetilde{e_a} \in p^{-1}(e_a)} (\mu ^-)^{k(m)}.$$
If the largest magnitude of a term in $S_{e_a}$ is $|\mu ^-|^n$, then
$$|S_{e_a}| \le \sum_{j=n}^{\infty} (\delta _R\delta _B |\mu ^-|)^j 
\le \frac{\delta _R\delta _B|\mu ^-|^n}{ 1 - \delta _R\delta _B |\mu ^-|}.$$
There are at most $(\delta _R\delta _B)^n$ sums $S_{e_a}$
with a leading term whose magnitude is as large as $|\mu ^-|^n$.
This count gives the bound 
$$\sum_{e_a}|S_{e_a}|^2 \le 
\sum_n (\delta _R\delta _B)^n\Bigl ( \frac{(\delta _R\delta _B)^n|\mu ^-|^n}{ 1 - \delta _R\delta _B |\mu ^-|} \Bigr )^2  
\le \frac{1}{ [1 - \delta _R\delta _B |\mu ^-|]^2} \sum_n (\delta _R\delta _B)^{3n}|\mu ^-|^{2n}.$$
Thus $h$ is square integrable if    
$(\delta _R\delta _B)^{3}|\mu ^-|^{2} < 1$.  

After modifying $C$, it follows that $H(\xi ,\lambda  ) $ is in the domain of $\cal L$,
with $(-D^2 -\lambda  )H(\xi ,\lambda  ) = f$.  

 \end{proof}

The diagonal of the resolvent kernel and the trace of the resolvent of a finite biregular graph $\graph $
contain information about closed walks in $\graph $. 
Let $e$ be an edge of the biregular graph $\graph $ with midpoint $g$ and vertices $r$ and $b$ of $e$ identified with $[0,1]$ as above. 
Define $P_e(z) = 1 + \sum_{l > 0} \eta _l z^{l/2}$ with coefficients $\eta _l$ counting  closed nonbacktracking walks of length $l$ starting at $g$.
Recall that $\eta _l = 0$ if $l$ is odd.

\begin{thm}\label{Thm5.2}    
For $|\,{\rm Im}\,(\sqrt \lambda  )| > C$ and $t \in e$ the diagonal of the resolvent may be written as
\begin{equation}
R_{\cal G}(t,t,\lambda  ) = P_e(\mu ^-(\lambda )) \frac{U(t,\lambda  )V(t,\lambda  )}{ -W(\lambda  )}, \quad t \in e . 
\label{5.a}\end{equation}

If ${\cal G}$ is a finite biregular graph with $N_{\eset}$ edges, then 
\begin{equation} \label{mainform}
{\rm tr}R_{\cal G}(\lambda  ) = \frac{P_{\graph }(\mu ^-(\lambda ))}{-W(\lambda  )} 
\int_0^1 U(t,\lambda  )V(t,\lambda  ) \ dt , 
\end{equation}
The function $P_{\graph}(z) = \sum_e P_e(z) = N_{\eset} + \sum_{l > 0} \eta _l z^{l/2}$, where
$\eta _l$ counts closed nonbacktracking walks of length $l$,
with one of the $N_{\eset}$ basepoints at the midpoint of an edge. 
\end{thm}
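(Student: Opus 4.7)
The plan is to combine \propref{Thm5.1} with the multiplier structure of \thmref{dom1} to reinterpret the tree-side resolvent sum as a walk-generating function. Fix an edge $e$ of $\graph$, a lift $\widetilde e_0 \subset \tree$ of $e$, and let $\widetilde t_0 \in \widetilde e_0$ be the distinguished lift of a point $t \in e$. I would first upgrade \propref{Thm5.1} to the pointwise kernel identity
$$R_{\graph}(t,t,\lambda) = \sum_{\widetilde t \in p^{-1}(t)} R_{\tree}(\widetilde t,\widetilde t_0,\lambda),$$
valid for $|\,{\rm Im}\sqrt \lambda | > C$ by the same convergence bound used there. Because $\tree$ is a tree and $p$ preserves the $R/B$ labeling (hence edge orientation), the non-trivial lifts $\widetilde t \ne \widetilde t_0$ are in bijection with the closed nonbacktracking walks in $\graph$ based at $t$: each lift determines the unique nonbacktracking tree-path from $\widetilde t_0$ to $\widetilde t$, which projects to the walk, and conversely every such walk lifts uniquely. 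The tree-distance between $\widetilde t_0$ and $\widetilde t$ equals the length $l$ of the walk, which is even by bipartiteness.

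The crux is the identity
$$R_{\tree}(\widetilde t,\widetilde t_0,\lambda) = (\mu^-(\lambda))^{l/2}\,\frac{-U(t,\lambda)V(t,\lambda)}{W(\lambda)}$$
for every lift at tree-distance $l$. By \eqref{resform}, as a function of its first argument, $R_{\tree}(\cdot,\widetilde t_0,\lambda)$ is a scalar multiple of $V$ throughout the ``$\tree_0$-side'' of $\widetilde t_0$ (with coefficient $-U(t,\lambda)/W(\lambda)$) and of $U$ throughout the ``$\tree_1$-side''. The radial extensions of $V$ and $U$ are symmetric across branches: the jump conditions \eqref{jumps0}, \eqref{jumps1} at each vertex distribute the derivative uniformly among the outgoing edges, so on every edge at edge-distance $2k$ from $\widetilde e_0$ along the tree-path to $\widetilde t$, the extended solution equals $(\mu^-)^k$ times its value on $\widetilde e_0$ in the compatible parameterization. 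Since $p$ preserves orientation, $\widetilde t$ sits at parameter $t$ on its edge, and evaluation at $\widetilde t$ produces the asserted factor with $k = l/2$.

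Summing over lifts yields \eqref{5.a}: the trivial lift contributes the $1$ in $P_e(z) = 1 + \sum_{l>0} \eta _l z^{l/2}$, and the remaining lifts contribute the walk terms. For the trace I would then integrate the diagonal over $\graph$ edge by edge. Because $P_e(\mu^-)$ is a scalar in $t$ and the integrand $U(t)V(t)/(-W)$ is the same function on every edge, the factor $\int_0^1 U(t)V(t)\,dt/(-W)$ pulls out of the sum, while $\sum_{e \in \eset} P_e(\mu^-)$ reassembles as $P_{\graph}(\mu^-) = N_{\eset} + \sum_{l>0} \eta _l (\mu^-)^{l/2}$, with $\eta _l$ now counting walks of length $l$ summed over all $N_{\eset}$ midpoint basepoints. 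This gives \eqref{mainform}.

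The principal obstacle is the radial multiplier identity in the second paragraph, particularly its validity along tree-paths that branch off from the ``standard ray'' along which $\mu^-$ was originally defined. The resolution is the uniformity of the biregular tree: at every vertex the jump conditions treat all outgoing edges identically, so the symmetric extensions of $V$ and $U$ along any nonbacktracking path from $\widetilde e_0$ behave exactly as along the standard ray, with the multiplier $(\mu^-)^k$ depending only on the edge-distance traveled.
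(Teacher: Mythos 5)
Your proposal is correct and follows essentially the same route as the paper: decompose the covering-space resolvent sum of \propref{Thm5.1} over the lifts of $t$, identify the nontrivial lifts with closed nonbacktracking walks via the universal cover, and use the radial structure of $U$ and $V$ (so that their value on an edge at distance $2k$ is $(\mu^-)^k$ times the value on $\widetilde e_0$) to extract the factor $(\mu^-(\lambda))^{l/2}$, then integrate edge by edge for the trace. The paper organizes the same computation by splitting the kernel into the terms $R^0$, $R^+$, $R^-$ according to $\widetilde\xi\in\widetilde e$, $x(\widetilde\xi)>1$, or $x(\widetilde\xi)<0$, but the substance is identical.
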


\begin{proof} 
Fix $\widetilde g \in \tree$, identifying the edge containing $g$ with $[0,1]$  
and let the function $x:\tree \to \real$ be as described above.
  
Suppose first that $\xi \in e$, where $f$ is supported.   
Split the resolvent sum into three parts,
\begin{equation}
[R_{\cal G}(\lambda  )f](\xi ) 
= [R_{\cal T}^0(\lambda  )\widetilde f](\xi ) + 
[R_{\cal T}^+(\lambda  )\widetilde f](\xi ) + 
[R_{\cal T}^-(\lambda  )\widetilde f](\xi ), \label{5.c}
\end{equation}  
where the $0,+,-$ terms are the resolvent sums of Theorem~\ref{Thm5.1} coming respectively
from $\widetilde \xi \in \widetilde e$, $x(\widetilde \xi ) > 1$ and $x(\widetilde \xi ) < 0$.
In the following sums, $e_m \in p^{-1}(e)$ means that there is a closed edgepath $\gamma $ starting at $g$ which lifts to 
a path $\widetilde{\gamma}$ based at $\widetilde g$ and ending in $e_m$.  Let $l(m) = 2k$ be the length of $\widetilde{\gamma}$. 
The three terms are given by integration against kernels, with $0 \le t \le 1$.
$R_{\cal T}^0(x,t,\lambda  )$ is given by \eqref{resform}, while 
\begin{equation}
R^+(x,t,\lambda  ) = \frac{U(t,\lambda  )V(x,\lambda  )}{ -W(\lambda  )} 
\sum_{x(e_m) > 1} [\mu ^-(\lambda  )]^{l(m)/2}, \quad e_m \in p^{-1}(e), \label{5.d}\end{equation}
and  
\begin{equation}
R^-(x,t,\lambda  ) = \frac{V(t,\lambda  )U(x,\lambda  )}{ -W(\lambda  )} 
\sum_{x(e_m) < 0} [\mu ^-(\lambda  )]^{l(m)/2}, \quad e_m \in p^{-1}(e). \label{5.e}
\end{equation}
That is, a closed nonbacktracking walk of length $l$ contributes powers $ \mu ^-(\lambda  )^{l/2} $. 
In case $\xi \notin e$ the term $R_{\cal T}^0$ will be missing, but otherwise
the representation of the resolvent will have the same form.    

Introduce the functions 
\[P^+_e(z) = \sum_{x(e_m) > 1} z^{l(m)/2} = \sum_{l > 0} \eta ^+_l z^{l/2},
\quad P^-_e(z) = \sum_{x(e_m) < 0} z^{l(m)/2} = \sum_{l > 0} \eta ^-_l z^{l/2},\]
and $P_e(z) = 1 + P^+_e(z) + P^-_e(z)$.
The coefficients $\eta ^+_l $, $\eta ^-_l $, count closed edgepaths in $\graph $ based 
at the midpoint of $e$, and whose lift from the midpoint of $\tilde e$ to the midpoint of $\tilde e$
has length $l$, with $x(\tilde e)$ respectively greater than $1$ or less than $0$.  
Setting $x=t$ and including the three terms in (\ref{5.c}) gives the description of the 
diagonal of the resolvent in the statement of the theorem.   \end{proof} 

Notice that \eqref{mainform} expresses ${\rm tr}R_{\cal G}(\lambda  )$ as a product, with the factor  
\[  \frac{1}{W(\lambda  )} \int_0^1 U(t,\lambda  )V(t,\lambda  ) \ dt \]
independent of $\graph $.  Dividing the general case of \eqref{mainform} by the example of the complete bipartite 
graph with the same degrees gives the corollary    
\begin{equation} \label{key1}
\frac{P_{\graph }(\mu ^-(\lambda ))} {P_{CB}(\mu ^-(\lambda ))} = \frac{{\rm tr}R_{\cal G}(\lambda  )} {{\rm tr}R_{CB}(\lambda  )}.
\end{equation}

\subsection{Structure of $P_{\graph }(z)$}

The effectiveness of \eqref{key1} is based on our knowledge of three of the four functions which appear.
The resolvent trace ${\rm tr}R_{\cal G}(\lambda  )$ is computed in \eqref{trace1b} from the eigenvalues of the discrete Laplacian $\Delta $.
The special case ${\rm tr}R_{CB}(\lambda  )$ is then presented in \eqref{comptrace}.  For the complete bipartite graph
$m_B = \delta _R +1$ and $m_R = \delta _B +1$, so \eqref{bCG} becomes
\begin{equation} \label{bCGform} 
P_{CG}(\mu ^-(\lambda )) =  \frac{(\delta _B +1)(\delta _R +1)}{2} 
\end{equation}
\[+  \frac{2(\delta _B +1)(\delta _R +1)\delta _B\delta _R} {1 + \delta _B\delta _R}
\Big[ \frac{\delta _B\delta _R\mu ^-(\lambda )^2}{1 - \delta _B\delta _R\mu ^-(\lambda )}  + \frac{\mu ^-(\lambda )^2}{1 + \mu ^-(\lambda )} \Big ] \]
The tree based term in \eqref{mainform} satisfies
\[-\frac{1}{W(\lambda  )} \int_0^1 U(t,\lambda  )V(t,\lambda  ) \ dt = {\rm tr}R_{CB}(\lambda  )/P_{CB}(\mu ^-(\lambda ))  . \]

From \eqref{BCG} it is easy to verify that the generating function $P_{CG}(z)$ in series form converges to an analytic function
for $|z| < \frac{1}{\delta _B\delta _R}$, and that it extends to a rational function for $z \in \complex $.
Currently, the function $P_{\graph }(\mu ^-(\lambda ))$ is only defined on the range of $\mu ^-(\lambda )$.
As described in \thmref{dom1}, $| \mu ^-(\lambda )| < 1/\sqrt{\delta _B\delta _R}$.  
Since $P_{CG}(z)$ has a rational extension, \eqref{key1} can be used to extend $P_{\graph }(z)$
by taking $z = \Psi (\lambda )$, where
\begin{equation} \label{zdef}
z = \Psi (\lambda ) = \Bigl \{ \begin{matrix} \mu ^{-}(\lambda), & |z| < 1/\sqrt{\delta _R\delta _B}, \cr
 \mu ^{+}(\lambda), & |z| > 1/\sqrt{\delta _R\delta _B}. \end{matrix} \Bigr \} 
 \end{equation}

The trace formula
\begin{equation} \label{eigrel}
\mu ^+(\lambda ) + \mu ^-(\lambda ) =   \mu ^- + 1/(\delta _R \delta _B \mu ^-) = {\rm tr}(T_j(\lambda )) 
\end{equation}
leads to consideration of the mapping $\phi (z) = z + 1/(c z)$, where $z \in \complex \setminus \{0\}$ and $c > 0$.
This discussion follows \cite[p. 89]{Marshall}, where additional information is available.
If $w = z + 1/(c z)$, then $2z = w \pm \sqrt{w^2 - 4/c}$, and $\phi (z)$ is two to one unless $w = \pm 2c^{-1/2}$. 
Since $w \pm \sqrt{w^2 - 4/c} \not= 0$ for $w \in \complex$, $\phi (z)$ is also surjective.  

In addition, $\phi (z) = \phi (1/(cz))$, so $\phi $ is injective for $|z| < c^{-1/2}$ and $|z| > c^{-1/2}$.  
The image of the circle $z = c^{-1/2}e^{i \theta }$ is the interval $[-2c^{-1/2}, 2c^{-1/2}]$.
Thus $\phi : \{ 0 < |z| < c^{-1/2} \} \to \complex \setminus [-2c^{-1/2}, 2c^{-1/2}] $ and $\phi : \{ |z | > c^{-1/2}  \} \to \complex \setminus [-2c^{-1/2}, 2c^{-1/2}] $ 
are both conformal maps.

To extend the definition of $z$ in \eqref{zdef}, first note that ${\mathcal C} = \{ (\lambda , \mu ^{\pm}(\lambda )) \in \complex ^2 \}$
is a Riemann surface presented as a two-sheeted branched cover of $\complex $.  As long as the eigenvalues of $T_j(\lambda )$ are distinct,
$\Psi (\lambda ) = \mu (\lambda )$ is a locally defined analytic function.  The proof of \thmref{dom1} shows that ${\mathcal C} $ is connected.
The definition of $z$ in \eqref{zdef} may be extended as the value of $\mu $ as a function from ${\mathcal C}$ to $\complex $.

\begin{lem} \label{psidef}
The extended map $z = \mu $ taking ${\mathcal C} $ to $\complex \setminus \{ 0 \}$ is surjective,
with the locally defined $\mu = \Psi (\lambda )$ satisfying $\Psi '(\lambda ) \not= 0$ unless $\cos (\omega )\sin (\omega )/\omega  = 0$.
\end{lem}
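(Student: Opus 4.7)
The plan is to convert both claims into questions about the trace of the transition matrices $T_j$ via Vieta's formulas, then analyze ${\rm tr}(T_j(\lambda ))$ as an entire function of $\lambda $. First I would use double-angle identities on
\[{\rm tr}(T_j) = \cos ^2(\omega )\Bigl (1 + \frac{1}{\delta _B\delta _R}\Bigr ) - \sin ^2(\omega )\Bigl (\frac{1}{\delta _R} + \frac{1}{\delta _B}\Bigr )\]
to rewrite it in the cleaner form ${\rm tr}(T_j) = A + B\cos (2\omega )$, where $A = (\delta _B-1)(\delta _R-1)/(2\delta _B\delta _R)$ and $B = (\delta _B+1)(\delta _R+1)/(2\delta _B\delta _R) > 0$. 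Because $\cos (2\omega ) = 1 - 2\sin ^2(\omega )$ is even in $\omega $, the right side descends to an entire function of $\lambda = \omega ^2$.

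For the surjectivity claim, fix $z \in \complex \setminus \{0\}$ and set $c = \delta _R\delta _B$. The characteristic polynomial of $T_j(\lambda )$ is $t^2 - {\rm tr}(T_j(\lambda ))\,t + 1/c$, so by Vieta's formulas the condition $\mu ^{\pm}(\lambda ) = z$ (for one of the two signs) is equivalent to ${\rm tr}(T_j(\lambda )) = z + 1/(cz)$. Since $\sin : \complex \to \complex $ is surjective and $\sin ^2(\omega ) = (1 - \cos (2\omega ))/2$ is entire in $\lambda $, the map $\lambda \mapsto \sin ^2(\sqrt{\lambda })$ is itself a surjective entire function; hence ${\rm tr}(T_j): \complex \to \complex $ is surjective, and the required $\lambda $ can be produced.

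For the derivative claim, at any $\lambda $ where $\mu ^{\pm}(\lambda )$ are distinct, the local branch $\Psi (\lambda )$ is analytic, and implicit differentiation of $\Psi + 1/(c\Psi ) = {\rm tr}(T_j)$ yields
\[\Psi '(\lambda )\Bigl (1 - \frac{1}{c\Psi (\lambda )^2}\Bigr ) = \frac{d}{d\lambda }{\rm tr}(T_j(\lambda )) = -\frac{B\sin (2\omega )}{\omega } = -\frac{2B\cos (\omega )\sin (\omega )}{\omega }.\]
Wherever $\Psi $ is locally single-valued one has $\Psi ^2 \neq 1/c$, since equality would force the two eigenvalues to coalesce, which is precisely a branch point of $\mathcal C$. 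So the bracketed factor is nonzero, and it follows that $\Psi '(\lambda ) = 0$ if and only if $\cos (\omega )\sin (\omega )/\omega = 0$, as claimed.

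The main obstacle I anticipate is bookkeeping around the branch points of $\mathcal C$: one must verify that restricting to $\lambda $ where $\Psi $ is locally defined automatically excludes the zeros of $1 - 1/(c\Psi ^2)$, which is what makes the implicit-differentiation argument clean. Beyond that, the key inputs are elementary — surjectivity of $\sin $ on $\complex $ for the first claim, and the identities $\sin (2\omega ) = 2\sin (\omega )\cos (\omega )$ together with $d\cos (2\sqrt{\lambda })/d\lambda = -\sin (2\omega )/\omega $ for the second.
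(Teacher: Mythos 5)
Your proof is correct, and it splits naturally into a half that matches the paper and a half that does not. The derivative claim is handled essentially as in the paper: both arguments differentiate the Vieta relation $\Psi + 1/(\delta _R\delta _B\Psi ) = {\rm tr}(T_j)$ and compute $\frac{d}{d\lambda }{\rm tr}(T_j) = -(1+\frac{1}{\delta _R})(1+\frac{1}{\delta _B})\cos (\omega )\sin (\omega )/\omega $, which agrees with your $-2B\cos (\omega )\sin (\omega )/\omega $; your only addition is to make explicit that $1 - 1/(\delta _R\delta _B\Psi ^2) \ne 0$ wherever the two eigenvalues are distinct, a point the paper leaves implicit in \eqref{dercomp}, and it is a genuine improvement to have said it. The surjectivity claim is where you take a different route. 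The paper partitions $\complex \setminus \{0\}$ into $\{|z| < 1/\sqrt{\delta _R\delta _B}\}$, $\{|z| > 1/\sqrt{\delta _R\delta _B}\}$ and the circle $|z| = 1/\sqrt{\delta _R\delta _B}$: the first two regions are covered using the conformal-mapping properties of $\phi (z) = z + 1/(\delta _R\delta _B z)$ developed just before the lemma, while the circle requires a separate real-variable argument (invoking \lemref{cross} to place such $\lambda $ in $[0,\infty )$ and then tracking ${\rm tr}(T_j(\lambda ))$ as $\cos ^2(\omega )$ runs between $1$ and $0$). Your argument avoids the case split entirely: for any $z \ne 0$ you solve ${\rm tr}(T_j(\lambda )) = z + 1/(\delta _R\delta _B z)$ using surjectivity of the entire function $\lambda \mapsto A + B\cos (2\sqrt{\lambda })$ --- which the paper dismisses with ``one checks easily'' and you actually justify via surjectivity of $\sin $ --- and Vieta's formulas then force $z$ to be an eigenvalue of $T_j(\lambda )$, i.e.\ $(\lambda , z) \in {\mathcal C}$. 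This is shorter and uniform in $z$; what the paper's longer route buys in exchange is the finer bookkeeping, reused later in the section, of which branch ($\mu ^-$ or $\mu ^+$) covers which region and how the two sheets are glued along the circle $|z| = 1/\sqrt{\delta _R\delta _B}$.
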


\begin{proof}
From \eqref{trace0},
\begin{equation} \label{traced}
{\rm tr}T_j(\lambda )
= \cos ^2(\omega )(1 + \frac{1}{\delta _B})(1 + \frac{1}{\delta _R}) - (\frac{1}{\delta _R} + \frac{1}{\delta _B} ).
\end{equation}
One checks easily that the range of ${\rm tr}T_j(\lambda )  $ is $\complex $.

The comments above about the mapping $\phi (z)$ mean that all pairs $\{\mu ^-,  \mu ^+ = 1/(\delta _R\delta _B\mu^-)\}$ with $\mu ^- \not= 0$ 
occur for some values of $\lambda $.   With $|\mu ^-| <  1/\sqrt{\delta _R\delta _B}$ and $|\mu ^+| > 1/\sqrt{\delta _R\delta _B}$ 
the individual values of $\mu ^-(\lambda )$ and $\mu ^{+}(\lambda ) $ are well-defined and analytic. 
The values of $\mu ^-(\lambda )$ and $\mu ^+(\lambda )$ respectively cover the sets $\{0 < |z| < 1/\sqrt{\delta _R\delta _B} \} $ and 
$\{|z| > 1/\sqrt{\delta _R\delta _B} \}$, showing that the range of $\Psi $ includes $\complex \setminus \{0,|z| = 1/\sqrt{\delta _R\delta _B}\}$.

It remains to consider $|\mu ^{\pm}(\lambda )| = 1/\sqrt{\delta _R\delta_B}$, in which case $\lambda \in [0,\infty )$ by \lemref{cross}.
As $\cos ^2(\omega ) $ decreases from $1$ to $0$ in \eqref{traced},  ${\rm tr}T_j(\lambda )$ decreases  from   
$1 + \frac{1}{\delta _B}\frac{1}{\delta _R}$ to $- (\frac{1}{\delta _R} + \frac{1}{\delta _B} )$.  From $(1/\sqrt{\delta _R} - 1/\sqrt{\delta _B} )^2 \ge 0$ follows
$1/\delta _R + 1/\delta _B \ge 2/\sqrt{\delta _r\delta _B}$ and from $(1 - 1/\delta _R)(1 - 1/\delta _B) \ge 0$
follows $1 + \frac{1}{\delta _B}\frac{1}{\delta _R} \ge (\frac{1}{\delta _R} + \frac{1}{\delta _B} )$.  Thus
\[ - (\frac{1}{\delta _R} + \frac{1}{\delta _B} ) \le -2/\sqrt{\delta  _R\delta _B} <  2/\sqrt{\delta  _R\delta _B} \le 1 + \frac{1}{\delta _B}\frac{1}{\delta _R} .\]
The behavior is similar as $\cos ^2(\omega ) $ increases from $0$ to $1$.  From \eqref{quadform} and \eqref{eval1}
the eigenvalues $\mu ^{\pm}(\lambda )$ traverse the upper and lower semicircles $|\mu ^{\pm}(\lambda )| = 1/\sqrt{\delta _R\delta _B}$
once in each of these segments where $|{\rm tr}T_j(\lambda)| \le 2/\sqrt{\delta  _R\delta _B}$, finishing the surjectivity of $\mu $.
By \thmref{dom1} continuation of $\mu ^{\pm}(\lambda )$ across an interval with $|{\rm tr}T_j(\lambda)| < 2/\sqrt{\delta  _R\delta _B}$
is analytic as $\mu ^{\pm}(\lambda )$ switches to the $\mu ^{\mp}(\lambda )$ sheet.

Computing the derivative of ${\rm tr}T_j(\lambda )$ in two ways gives 
\[\frac{d {\rm tr} T_j(\lambda )}{d\lambda } = - \cos (\omega )\frac{\sin (\omega )}{\omega }(1+\frac{1}{\delta _R})(1 + \frac{1}{\delta _B}), \]
and 
\[\frac{d}{d \lambda } (\mu ^- + \frac{1}{\delta _R\delta _B \mu ^-}) = (\mu ^-)' (1 - \frac{1}{\delta _R\delta _B (\mu ^-)^2} )\]
so zeros of the derivative of $\mu ^{-}(\lambda )$ are constrained by
\begin{equation} \label{dercomp}
\mu ^-(\lambda )' (1 - \frac{1}{\delta _R\delta _B (\mu ^-(\lambda ))^2} ) =  - \cos (\omega )\frac{\sin (\omega )}{\omega }(1+\frac{1}{\delta _R})(1 + \frac{1}{\delta _B}).
\end{equation}
From $\mu ^+(\lambda ) \mu ^-(\lambda ) = 1/(\delta _R\delta _B)$,  
\[\mu ^+(\lambda )' = - \mu ^-(\lambda )'\mu ^+(\lambda )/ \mu ^-(\lambda ) ,\]
so the derivatives of $\mu ^+$ and $\mu ^-$ vanish together. 

\end{proof}

As noted, $\Psi (\lambda ) = \mu $ is a locally defined analytic function on the connected
set $\{ \mu ^+(\lambda ) \not= \mu ^-(\lambda ) \} $, so locally \eqref{key1} extends to 
\begin{equation} \label{key2}
\frac{P_{\graph }(\Psi (\lambda ))} {P_{CB}(\Psi (\lambda ))} = \frac{{\rm tr}R_{\cal G}(\lambda  )} {{\rm tr}R_{CB}(\lambda  )}.
\end{equation}  
or globally 
\begin{equation} \label{key3}
\frac{P_{\graph }(\mu )} {P_{CB}(\mu )} = \frac{{\rm tr}R_{\cal G}(\lambda  )} {{\rm tr}R_{CB}(\lambda  )}.
\end{equation} 
The left side of \eqref{key2} depends on $\mu = \Psi (\lambda )$, so that is true of the right side as well, implying that
$P_{\graph }(z)$ is well defined.  The right side of \eqref{key2} depends on $\lambda $, so 
$P_{\graph }(\mu ^+(\lambda ))/P_{CB}(\mu ^+(\lambda )) =  P_{\graph }(\mu ^-(\lambda ))/P_{CB}(\mu ^-(\lambda ))$.

\begin{thm}
If $\graph $ is a finite biregular graph then the generating function $P_{\graph}(z)$ extends analytically to 
\begin{equation} \label{xdom1}
\{ |z | < 1/\sqrt{\delta _R\delta _B} \} \setminus \{ z \in \real \ | \ 1/(\delta _R \delta _B)  \le |z|  < 1/\sqrt{\delta _R\delta _B}  \} 
\end{equation}
and 
\[\{ |z | > 1/\sqrt{\delta _R\delta _B} \} \setminus \{ z \in \real \ | \ 1/\sqrt{\delta _R\delta _B}  <  |z|  \le 1  \} ,\]
except for poles at $z = \pm i $.
\end{thm}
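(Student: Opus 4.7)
The plan is to use the identity \eqref{key3} as the vehicle for the analytic extension: rewriting it as $P_{\graph}(z) = P_{CB}(z) \cdot [{\rm tr}\,R_{\graph}(\lambda)/{\rm tr}\,R_{CB}(\lambda)]$ with $z = \Psi(\lambda)$, the right-hand side furnishes a formula that extends $P_{\graph}(z)$ beyond the disk of convergence of its defining power series, provided the two factors can be analyzed jointly. The first factor, $P_{CB}(z)$, is a completely explicit rational function: combining the two rational summands of \eqref{bCGform} over the common denominator $(1-\delta_R\delta_B z)(1+z)$ shows that $P_{CB}(z)$ has only the simple poles at $z = 1/(\delta_R\delta_B)$ and $z = -1$, both real and both lying inside the excluded real intervals of the theorem. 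The second factor is meromorphic in $\lambda$, and by \propref{reality} applied both to $\graph$ and to the complete bipartite graph, all of its poles and zeros lie on the real $\lambda$-axis.

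Next I would transfer this analysis to the $z$-side via \lemref{psidef}. That lemma shows $\Psi$ admits local analytic inverses away from the discrete real set where $\cos(\omega)\sin(\omega)/\omega = 0$. For any $z$ lying in the complement of the real axis within either of the regions $\{|z|<1/\sqrt{\delta_R\delta_B}\}$ or $\{|z|>1/\sqrt{\delta_R\delta_B}\}$, the preimage $\lambda = \Psi^{-1}(z)$ is non-real, so the trace ratio is analytic there; consequently the extended $P_{\graph}(z)$ is meromorphic on the non-real portion of each region, with possible poles only where $P_{CB}(z)$ already has poles. I would then verify that the image under $\Psi$ of the real $\lambda$-axis inside each region is exactly the excluded real interval of the theorem, so that removing those intervals also removes every potential singularity coming from the $\lambda$-singularities of the trace ratio. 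Single-valuedness of the extension follows from the observation immediately below \eqref{key3} that $P_{\graph}(\mu^+)/P_{CB}(\mu^+) = P_{\graph}(\mu^-)/P_{CB}(\mu^-)$, together with the connectedness of $\mathcal{C}$ from \thmref{dom1}.

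The main obstacle, and the technical heart of the argument, is the precise identification of the residual poles at $z = \pm i$. Since $\pm i$ is not real, the corresponding $\lambda$ is not real either, so neither $P_{CB}(z)$ nor ${\rm tr}\,R_{\graph}(\lambda)/{\rm tr}\,R_{CB}(\lambda)$ is individually singular at these $\lambda$-preimages; the poles at $\pm i$ must therefore emerge from the interplay between the two factors after the full rational combination is carried out. I would demonstrate that ${\rm tr}\,R_{\graph}(\lambda)/{\rm tr}\,R_{CB}(\lambda)$, rewritten as a function of $z$ via $\Psi$, acquires a denominator factor $1+z^2$ whose zeros are precisely $\pm i$ and which is not canceled by the numerator of $P_{CB}(z)$. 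Verifying this requires a careful computation using \eqref{trace1b} and \eqref{comptrace} together with the identities relating $\cos(\omega)$ to $\mu^\pm$, and ruling out the possibility that these candidate poles are removable is the delicate final step.
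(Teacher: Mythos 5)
Your first two paragraphs track the paper's argument: extend $P_{\graph}$ through the identity \eqref{key2}, confine the singularities of $Q(\lambda)={\rm tr}R_{\graph}(\lambda)/{\rm tr}R_{CB}(\lambda)$ to the real axis via \propref{reality}, and transfer to the $z$-plane through $\Psi$. One step as stated would fail: the image under $\Psi$ of the real $\lambda$-axis is \emph{not} exactly the excluded real intervals. For $\lambda<0$ the eigenvalues are real with $\mu^{+}(\lambda)>1$ and hence $\mu^{-}(\lambda)\in(0,1/(\delta_R\delta_B))$, so the negative axis maps into the interior of the region \eqref{xdom1} where analyticity is being claimed. What rescues the argument, and what the paper actually uses, is that the poles of $Q$ lie in $[0,\infty)$ rather than merely in $\real$ (the eigenvalues of $\dop$ are nonnegative, and ${\rm tr}R_{CB}(\lambda)$ is a sum of positive terms for $\lambda<0$, hence nonvanishing there), together with the inequalities $\mu^{+}(\lambda)\le 1$ and $\mu^{-}(\lambda)\ge 1/(\delta_R\delta_B)$ for $\lambda\ge 0$ with real eigenvalues, proved from \eqref{traced} and \eqref{quadform}; only the image of $[0,\infty)$ needs to be contained in the circle plus the excluded intervals.

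The third paragraph contains the genuine gap, and it contradicts your own setup. Having established that $P_{CB}(z)$ is analytic at $z=\pm i$ and that $Q$ is analytic at the non-real preimages $\Psi^{-1}(\pm i)$, you cannot obtain poles of the product $P_{CB}(z)\,Q(\Psi^{-1}(z))$ at $\pm i$ from ``interplay between the two factors'': a product of functions analytic at a point is analytic there, so the uncanceled factor $1+z^2$ you propose to extract from the trace ratio cannot exist, and the ``delicate final step'' is a dead end. The paper's proof settles the exceptional poles in one line, in the opposite way: it asserts that the complete-bipartite generating function itself has poles at $z=\pm i$ and $\pm 1/(\delta_R\delta_B)$ (the $1+z^2$ denominator visible in \eqref{bCG}), so the poles of $P_{\graph}$ at $\pm i$ are inherited directly from the explicit factor $P_{CB}$. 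The tension between your pole list $\{1/(\delta_R\delta_B),-1\}$, read literally from \eqref{bCGform}, and the paper's $\{\pm i,\pm 1/(\delta_R\delta_B)\}$ comes from which normalization of the generating variable is in force ($z^{l/2}$ versus $z^{l}$, i.e.\ $P$ versus $p(z)=P(z^2)$); this must be fixed consistently with \eqref{mainform} before the exceptional poles can be located, and with the normalization you adopted the argument actually forces $P_{\graph}$ to be analytic at $\pm i$, contradicting the statement being proved.
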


\begin{proof}

The resolvent traces in \eqref{key2} are meromorphic in $\complex$, with poles in $[0,\infty )$.  
As noted in \propref{reality}, ${\rm tr}R_{CB}(\lambda  ) = 0$ implies $\lambda \ge 0$.
Thus $Q(\lambda ) = {\rm tr}R_{\cal G}(\lambda  )/ {\rm tr}R_{CB}(\lambda  )$ is meromorphic in $\complex$, with all poles 
in $[0,\infty )$. 
By \thmref{dom1} $\mu ^-(\lambda )$ and $\mu ^+(\lambda)$ are analytic in the complement of $\sigma _1 \subset [0, \infty )$.
Since $P_{CG}$ is rational, $P_{\graph }(\mu ^-(\lambda ))$ and $P_{\graph }(\mu ^+(\lambda ))$ are meromorphic in $\complex \setminus \sigma _1$.

By \eqref{dercomp}, $\mu ^-(\lambda )' $ is not zero unless $\cos (\omega )\sin (\omega )/\omega = 0$;
these roots only occur for $\lambda \ge 0$.  If $\mu ^-(\lambda _0)' \not= 0$, then $\mu ^+(\lambda _0 )' \not= 0$, 
in which case both $\mu ^+(\lambda )$ and $\mu ^-(\lambda )$ are conformal maps in a neighborhood of $\lambda _0$. 
If $Q(\lambda ) $ is analytic in a neighborhood of $\lambda _0$ and if $P_{CB}(\Psi(\lambda ))$ is also analytic in a neighborhood of $\mu ^{\pm}(\lambda _0)$, 
then \eqref{key2} provides a definition of $P_{\graph}(z) = P_{\graph }(\Psi(\lambda ))$ as an analytic function in some neighborhood of $z_0 = \mu ^{\pm}(\lambda _0)$.  

$Q(\lambda )$ is analytic in $\complex \setminus [0,\infty )$.  As noted for \eqref{eval1}, ${\rm tr}(T_j)(\lambda)$ is real
when $\lambda  $ is real.   If $\lambda \in \real $ and ${\rm tr}(T_j)^2/4 - \det (T_j) \le 0$ then $|\mu ^{\pm}(\lambda ) | = 1/\sqrt{\delta _B \delta _R}$.
If $\lambda \ge 0$ and ${\rm tr}(T_j)^2/4 - \det (T_j) \ge 0$ then $\mu ^{\pm}(\lambda ) \in \real $ and 
$|{\rm tr}(T_j)(\lambda ) | \le 1 + \frac{1}{\delta _R}\frac{1}{\delta _B}$ by \eqref{traced}.  By \eqref{quadform} 
\[\mu ^+ \le \frac{1}{2}(1 + \frac{1}{\delta _R}\frac{1}{\delta _B}) + \sqrt{(1 - \frac{1}{\delta _R}\frac{1}{\delta _B})^2/4} = 1,\]
so $\mu ^-(\lambda ) \ge 1/(\delta _R \delta _B)$ for $\lambda \ge 0$.  
Finally, the rational function $P_{CB}(z) $ has poles at $z = \pm i , \pm 1/(\delta _R\delta _B)$.
Thus $P_{\graph} (z) $ is analytic in the sets in \eqref{xdom1}.
\end{proof}

By \eqref{traced} the functions ${\rm tr}T_j(\lambda )$, and so the (unordered pair of) eigenvalues $\mu ^{\pm}(\lambda )$, 
are periodic in $\omega $ with period $\pi $.  Thus any singularities in $P_{\graph }(\mu )$ must already appear in 
the strip $0 \le \Re (\omega ) \le \pi $.  The function $Q(\lambda ) = {\rm tr}R_{\cal G}(\lambda  )/ {\rm tr}R_{CB}(\lambda  )$ only has singularities in $[0, \infty )$,
meaning that singular points for $P_{\graph }(z)$ coming from $Q(\lambda )$ will appear for $0 \le \lambda \le \pi ^2$,
where $Q(\lambda )$ has a finite set of poles.  

By \eqref{trace1b} the pole locations for ${\rm tr}R_{\cal G}(\lambda  )$ with $0 \le \lambda \le \pi ^2$ are $0$, $\pi ^2$ 
and the points with $\cos (\sqrt{\lambda } ) = 1 - \nu _k$ for $k = 1,\dots ,N_{\vset } - 2$. 
For the complete bipartite graph $\dop $ has eigenvalues at $0,(\pi/2 )^2, \pi ^2$ in the interval $[0, \pi ^2]$. 
By \propref{reality} the function ${\rm tr}R_{CG}(\lambda )$ has two roots $\xi _i$ lying between the consecutive eigenvalue pairs.
Thus in the strip $0 \le \Re (\lambda ) \le \pi ^2$ the function $Q(\lambda ) $ has at most $N_{\vset} + 2$ poles, all lying in $[0,\pi ^2 ]$. 
 
\begin{thm}
If $\graph $ is a finite biregular graph then the generating function $P_{\graph}(z)$ extends to a 
rational function.  The poles are all located in the set
\[\{ z = \pm i\} \cup \{ |z| = 1/\sqrt{\delta _R\delta _B} \} \cup \{ z \in \real \ | \ 1/(\delta _R \delta _B)  \le |z|  < 1  \} .\]
\end{thm}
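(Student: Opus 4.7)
The plan is to work from identity \eqref{key3} in the form $P_\graph(z) = P_{CB}(z)\,\widetilde Q(z)$, where $\widetilde Q(z)$ denotes the common value $Q(\lambda) = {\rm tr}R_\graph(\lambda)/{\rm tr}R_{CB}(\lambda)$ taken at any $\lambda$ with $\mu^\pm(\lambda) = z$. The left side of \eqref{key3} is manifestly a function of $z$ alone, so $Q(\lambda_1) = Q(\lambda_2)$ whenever $\mu^\pm(\lambda_1) = \mu^\pm(\lambda_2)$. Combined with the surjectivity of the extended $\mu : \mathcal{C} \to \complex \setminus \{0\}$ from \lemref{psidef}, this defines $\widetilde Q$ as a single-valued function on $\complex \setminus \{0\}$, and hence the same for $P_\graph(z) = P_{CB}(z)\,\widetilde Q(z)$.

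I would next show $\widetilde Q$ is meromorphic on $\complex$. Wherever $(\mu^\pm)'(\lambda) \neq 0$, the branch $\mu^\pm$ has a local analytic inverse $\lambda(z)$ and $\widetilde Q(z) = Q(\lambda(z))$ inherits meromorphicity from that of $Q(\lambda)$. By \eqref{dercomp}, the exceptions are the discrete set of $\lambda$ with $\cos(\omega)\sin(\omega)/\omega = 0$ and the circle $|z| = 1/\sqrt{\delta_R\delta_B}$, the branch locus where $\mu^+$ and $\mu^-$ collide. At the first set the two local inverses on opposite sheets give the same $Q$-value by \eqref{key3}, producing a removable extension; on the branch circle, $\mu$ still varies analytically with $z$ along $\mathcal{C}$, and since $\widetilde Q(z)$ depends only on $z$ (not on the choice of sheet), it continues analytically across this set. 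Poles of $Q(\lambda)$ lie in $\real$ by \propref{reality}, and by $\pi$-periodicity of $\mu^\pm$ in $\omega$ (via \eqref{traced}) each pole of $\widetilde Q$ lifts to a pole of $Q$ in the fundamental domain $[0,\pi^2]$; the remarks preceding the theorem bound these by $N_{\vset} + 2$, so $P_\graph$ has finitely many poles on $\complex$.

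For rationality I would check polynomial growth at $\infty$. As $|z| \to \infty$, taking $z = \mu^+(\lambda)$, \lemref{asyneg} gives $\lambda \to -\infty$; from \eqref{trace1b} the resolvent traces ${\rm tr}R_\graph(\lambda)$ and ${\rm tr}R_{CB}(\lambda)$ are both of order $1/\sqrt{|\lambda|}$, so $Q(\lambda)$ stays bounded, and combined with polynomial growth of the rational function $P_{CB}$, the product $P_\graph(z)$ has polynomial growth and is therefore rational. The pole locations are then determined case by case: poles of $P_{CB}$ are as enumerated in the remarks preceding the theorem, and poles coming from $\widetilde Q$ correspond to poles of $Q$ at real $\lambda \in [0,\pi^2]$, which split into $\lambda \in \sigma_1$ (mapping to the circle $|z| = 1/\sqrt{\delta_R\delta_B}$) and $\lambda \in [0,\pi^2]\setminus\sigma_1$ (mapping, by the range computation in the proof of \lemref{psidef}, to real $z$ with $1/(\delta_R\delta_B) \leq |z| \leq 1$). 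The main technical obstacle will be the meromorphic continuation across the branch circle $|z| = 1/\sqrt{\delta_R\delta_B}$, since this is precisely where the two sheets of $\mathcal{C}$ coalesce and local inverses of $\mu^\pm$ break down; one must use the global structure of $\mathcal{C}$ and the sheet-independence of $Q$ to rule out branching or essential singularities there.
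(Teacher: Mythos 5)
Your proposal follows essentially the same route as the paper: both start from \eqref{key2}--\eqref{key3} to write $P_{\graph}(z)=P_{CB}(z)\,Q(\lambda)$ as a well-defined function of $z$, establish meromorphy by disposing of the exceptional sets (poles of $Q$, zeros of $(\mu^{\pm})'$, coalescence of eigenvalues, poles of $P_{CB}$), invoke the $\pi$-periodicity of ${\rm tr}\,T_j$ in $\omega$ to reduce to finitely many poles in $[0,\pi^2]$, and conclude rationality from the behavior at $\infty$ (where $|\mu^+|\to\infty$, the traces decay, and $P_{CB}$ has a pole). Your treatment is if anything slightly more explicit than the paper's at the image circle $|z|=1/\sqrt{\delta_R\delta_B}$ and in matching the decay rates of the two traces, and the only discrepancy is the harmless $|z|\le 1$ versus the theorem's $|z|<1$ in the final pole enumeration.
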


\begin{proof}

The function $P_{\graph}(z) $ is analytic except possibly at a finite collection of points for which:

(i) $\lambda $ is a pole of $Q(\lambda )$, 

(ii)  $\mu ^{\pm}(\lambda )' = 0$, 

(iii) $\mu ^-(\lambda ) = \mu ^+(\lambda )$, 

(iv) $z$ is a pole of $P_{CG}(z)$. 

By \eqref{key2} the isolated singular points for $z \in \complex $ are not essential singularities, so \cite[p. 105-109]{GK} $P_{\graph }(z)$ is 
meromorphic in $\complex $ with a finite set of poles, which are limited to the sets (i) and (iv). In the strip $0 \le \Re (\omega ) \le \pi $, 
note that $|{\rm tr}T_j(\lambda )| \to \infty $, and so  $|\mu ^+(\lambda )| \to \infty $ as $| \Im (\omega )| \to \infty $.
From \eqref{trace1b} the resolvent traces have limit zero as $| \Im (\lambda )| \to \infty $ in this strip.
Moreover $P_{CB}(z)$ has a pole at $\infty $.  Thus  $P_{\graph }(z)$ is a meromorphic function with finitely many poles in $\complex \cup \infty $,
so is rational \cite[p. 141]{GK}.

\end{proof}

\end{document}